\numberwithin{equation}{section}
\newtheorem{thm}{Theorem}[section]
\newtheorem{lma}[thm]{Lemma}
\newtheorem{cor}[thm]{Corollary}
\newtheorem{defn}[thm]{Definition}
\newtheorem{prop}[thm]{Proposition}
\newtheorem{ques}[thm]{Question}
\renewcommand{\geq}{\geqslant}
\renewcommand{\leq}{\leqslant}
\renewcommand{\H}{\text{H}}
\renewcommand{\P}{\text{P}}
\title{Inhomogeneous self-similar sets and box dimensions}
\author{Jonathan M. Fraser\\ \\
\emph{Mathematical Institute, University of St Andrews, North Haugh,}\\ \emph{St Andrews, Fife, KY16 9SS, Scotland}\\ \emph{e-mail: jmf32@st-andrews.ac.uk}}
\begin{document}
\maketitle

\begin{abstract}
We investigate the box dimensions of inhomogeneous self-similar sets.  Firstly, we extend some results of Olsen and Snigireva by computing the upper box dimensions assuming some mild separation conditions.  Secondly, we investigate the more difficult problem of computing the lower box dimension.  We give some non-trivial bounds and provide examples to show that lower box dimension behaves much more strangely than the upper box dimension, Hausdorff dimension and packing dimension.
\\ \\
\emph{Mathematics Subject Classification} 2010:  primary: 28A80, 26A18.\\ \\
\emph{Key words and phrases}: inhomogeneous self-similar set, box dimension, covering regularity exponent.
\end{abstract}

\section{Introduction}

In this paper we investigate the dimensions of inhomogeneous attractors.  If a dimension function is countably stable (the dimension of a countable union of sets is equal to the supremum of the individual dimensions), then the dimension is easy to compute.  In particular, the dimension is the maximum of the dimension of the corresponding homogeneous attractor and the dimension of the condensation set.  However, if a dimension function is not countably stable, then the problem is more difficult.  As such we investigate the (not countably stable) \emph{box dimensions} of inhomogeneous self-similar sets.   We extend some results of Olsen and Snigireva \cite{olseninhom, ninaphd} by computing the upper box dimensions assuming some mild separation conditions.  We show that in our setting the upper box dimension behaves in the same way as the countably stable dimensions.  Secondly, we investigate the more difficult problem of computing the lower box dimension.  We give some non-trivial bounds on the lower box dimension and prove that it does not behave as the other dimensions.  In particular, the lower box dimension is not in general the maximum of the lower box dimensions of the homogeneous self-similar set and the condensation set.  We introduce a quantity which we call the \emph{covering regularity exponent} which is designed to give information about the oscillatory behaviour of the covering function $N_\delta$ and use it to study the lower box dimensions.  We believe the covering regularity exponent will be a useful quantity in other circumstances where one needs finer information about the asymptotic properties of $N_\delta$, or indeed other function where the asymptotic oscillations are important.

\subsection{Inhomogeneous attractors}

Let $(X,d)$ be a compact metric space.  An iterated function system (IFS) is a finite collection $\mathbb{I}=\{ S_i \}_{i=1}^{N}$ of contracting self maps on $X$.  It is a fundamental result in fractal geometry (see \cite{hutchinson}) that for every IFS there exists a unique non-empty compact set,  $F$, called the attractor, which satisfies
\begin{equation} \label{hom}
F = \bigcup_{i=1}^{N} S_i (F).
\end{equation}
We call such attractors \emph{homogeneous} attractors.  Now fix a compact set $C \subseteq X$, sometimes called the \emph{condensation set}.  Analogous to above, there is a unique non-empty compact set, $F_C$, satisfying
\begin{equation} \label{inhom}
F_C =  \bigcup_{i=1}^{N} S_i (F_C) \ \cup \ C,
\end{equation}
which we refer to as an \emph{inhomogeneous} attractor (with condensation $C$).  Note that homogeneous attractors are inhomogeneous attractors with condensation equal to the empty set.  From now on we will assume that the condensation set is non-empty.  Inhomogeneous attractors were introduced and studied in \cite{barndemko} and are also discussed in detail in \cite{superfractals} where, among other things, Barnsley gives applications of these schemes to image compression.  Define the \emph{orbital set}, $\mathcal{O}$, by
\[
\mathcal{O} \ = \ C \ \cup \  \bigcup_{k \in \mathbb{N}} \ \  \bigcup_{i_1, \dots, i_k \in \{1, \dots, N\}} S_{i_1} \circ \cdots \circ S_{i_k}(C),
\]
i.e., the union of the condensation set, $C$, with all images of $C$ under compositions of maps in the IFS.  The term \emph{orbital set} was introduced in \cite{superfractals} and it it turns out that this set plays an important role in the structure of inhomogeneous attractors and, in particular,
\begin{equation} \label{structure}
F_C \ = \  F_\emptyset \cup \mathcal{O} \ = \  \overline{\mathcal{O}},
\end{equation}
where $F_\emptyset$ is the \emph{homogeneous} attractor of the IFS, $\mathbb{I}$.

\begin{figure}[H]
	\centering
	\includegraphics[width=70mm]{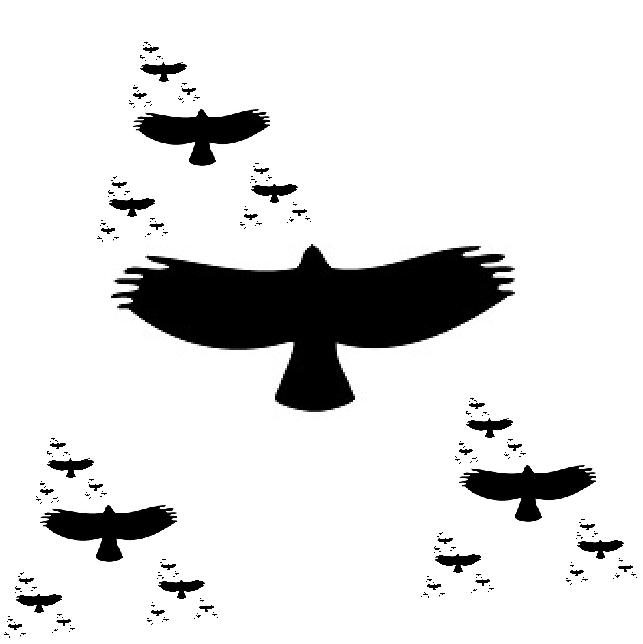}
\qquad \qquad
	\includegraphics[width=70mm]{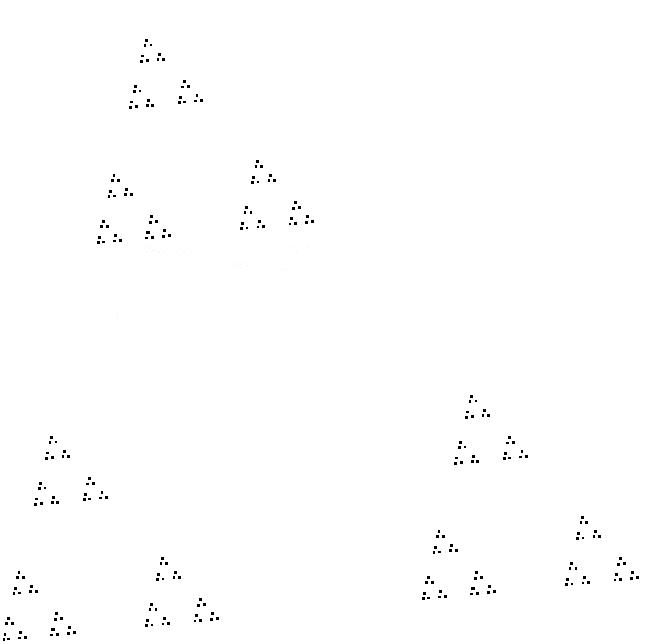}
	\caption{\emph{A flock of birds from above} (left).  The `flock' is represented by an inhomogeneous self-similar set.  The large bird in the middle is the condensation and there are 3 similarity mappings in the IFS all with contraction ratio 1/3.  The corresponding homogeneous attractor is shown on the right.  This is a totally disconnected self-similar set with Hausdorff and box dimension equal to 1.
}
\end{figure}

The relationship (\ref{structure}) was proved in \cite[Lemma 3.9]{ninaphd} in the case where $X$ is a compact subset of $\mathbb{R}^d$ and the maps are similarities.  We note here that their arguments easily generalise to obtain the general case stated above.  Writing $\dim_\text{\H}$ and $\dim_\text{\P}$ for Hausdorff and packing dimension respectively, it follows immediately from (\ref{structure}) that
\[
\dim_\H F_C = \max \{ \dim_\H  F_\emptyset, \ \dim_\H C\} \qquad \text{and} \qquad \dim_\P F_C = \max \{ \dim_\P F_\emptyset, \ \dim_\P C\}
\]
since Hausdorff and packing dimension are countably stable.  Indeed such a relation holds for any definition of dimension which is countably stable, for example, \emph{modified} upper and lower box dimension, see \cite[Section 3.3]{falconer}.  However, upper and lower box dimension are \emph{not} countably stable and in fact lower box dimension is not even finitely stable, and so studying the lower and upper box dimensions of inhomogeneous attractors is a more subtle problem.  In \cite[Corollary 2.6]{olseninhom} and \cite[Theorem 3.10 (2)]{ninaphd} it was proved that if $X \subset \mathbb{R}^d$; each of the $S_i$ are similarities; and the sets $S_1 (F_C), \dots, S_N(F_C),  C$ are pairwise disjoint, then
\[
\overline{\dim}_\text{B} F_C = \max \{ \overline{\dim}_\text{B}  F_\emptyset, \ \overline{\dim}_\text{B} C\}.
\]
The authors then asked the following question, see \cite[Question 2.7]{olseninhom} and \cite[Question 3.12]{ninaphd}.
\begin{ques}\label{quest}
Does the above formula for upper box dimension remain true if we relax the separation conditions to only the inhomogeneous open set condition (IOSC)?
\end{ques}

We give an affirmative answer to this question and furthermore prove that it holds assuming only that the IFS, $\mathbb{I}$, satisfies the strong open set condition (which is equivalent to the open set condition if $X \subset \mathbb{R}^d$), see Corollary \ref{ques}, and even without assuming any separation conditions it holds \emph{generically}, see Corollary \ref{generic}.  We remark here that the definitions of the IOSC given in \cite{olseninhom, ninaphd} are slightly different.  Rather than give both of the technical definitions we simply remark that we are able to answer Question \ref{quest} using significantly weaker separation conditions than either version of the IOSC.  In particular, the condensation set can have arbitrary overlaps with the basic sets in the construction of the homogeneous attractor.
\\ \\
In \cite{olseninhom, ninaphd} the authors also point out that they are not aware if the corresponding formula holds for lower box dimension.  The following question is asked in \cite[Question 3.11]{ninaphd}.
\begin{ques}\label{quest2}
If $X \subset  \mathbb{R}^d$, each of the $S_i$ are similarities and the sets $S_1 (F_C), \dots, S_N(F_C),  C$ are pairwise disjoint, then is it true that
\[
\underline{\dim}_\text{\emph{B}} F_C = \max \{ \underline{\dim}_\text{\emph{B}}  F_\emptyset, \ \underline{\dim}_\text{\emph{B}} C\}?
\]
\end{ques}
We prove that the answer to this question is \emph{no}, see Theorem \ref{lowerbound} and Proposition \ref{exlem} (2).  We also give some sufficient conditions for the answer to be \emph{yes}, see Corollary \ref{trivial bounds}, Corollary \ref{bzerocor}, Theorem \ref{upperbound} and Proposition \ref{exlem} (1).

\subsection{Basic definitions and notation}

In this section we recall some basic definitions and fix some notation needed to state our results.  The following separation condition is fundamental in the theory of IFSs.

\begin{defn}
An IFS, $\{S_{1}, \dots,  S_{N}\}$, with attractor $F$ satisfies the \emph{strong open set condition (SOSC)}, if there exists a non-empty open set, $U$, with $F \cap U \neq \emptyset$ and such that
\[
\bigcup_{i=1}^N S_{i}(U) \subseteq U
\]
with the union disjoint.
\end{defn}

A celebrated result of Schief \cite{schief} is that the SOSC is equivalent to the weaker \emph{open set condition} (OSC) if $X \subset  \mathbb{R}^d$ and the maps in the IFS are similarities.  The OSC is the same as the SOSC but without the requirement that $F \cap U \neq \emptyset$.  We adapt the SOSC to the case of inhomogeneous attractors in the following way.

\begin{defn}
An IFS, $\{ S_{1}, \dots,  S_{N}\}$, together with a compact set $C \subseteq X$, satisfies the \emph{condensation open set condition (COSC)}, if the IFS, $\{ S_{1}, \dots,  S_{N}\}$, satisfies the SOSC and the open set, $U$, can be chosen such that $C \subseteq \overline{U}$.
\end{defn}

The COSC will only be used to obtain one of our results, Theorem \ref{lowerbound}.  
\\ \\
Recall that a map $S:X \to X$ is called a similarity if, for all $x,y \in X$, we have $d(S(x),S(y)) = c \, d(x,y)$ for some constant $c \in (0,1)$.  The constant $c$ is called the \emph{Lipschitz constant} and for a similarity, $S$, we will write $\text{Lip}(S)$ to denote the Lipschitz constant for $S$.  Given an IFS, $\mathbb{I}=\{ S_{1}, \dots,  S_{N}\}$, consisting of similarities, the \emph{similarity dimension} of the homogeneous attractor of $\mathbb{I}$ is defined to be the unique solution to Hutchison's formula
\begin{equation} \label{hutch}
\sum_{i=1}^{N} \text{Lip}(S_i)^s = 1.
\end{equation}
It is well-known that if such an IFS satisfies the SOSC, then the similarity dimension equals the Hausdorff, packing and box dimension of the homogeneous attractor, see \cite{schief2}, or for the Euclidean case see \cite{hutchinson} or \cite[Section 9.3]{falconer}.  We will now recall the definition of box dimension.  For a non-empty subset $F \subseteq X$ and some $\delta>0$, let $N_\delta(F)$ be the minimum number of sets of diameter $\delta$ required to cover $F$.  The lower and upper box dimension of $F$ are defined by
\[
\underline{\dim}_\text{B} F = \liminf_{\delta \to 0} \, \frac{\log N_\delta (F)}{-\log \delta}
\qquad
\text{and}
\qquad
\overline{\dim}_\text{B} F = \limsup_{\delta \to 0} \,  \frac{\log N_\delta (F)}{-\log \delta},
\]
respectively.  If $\underline{\dim}_\text{B} F = \overline{\dim}_\text{B} F$, then we call the common value the \emph{box dimension} of $F$ and denote it by $\dim_\text{B} F$.  What we call the box dimension is also sometimes referred to as the box-counting dimension, entropy dimension or Minkowski dimension.  For a non-empty subset $F \subseteq X$ we note the following relationships between the dimensions discussed above
\[
\begin{array}{ccccc}
& &                                                                              \dim_\text{\P} F                                    & &  \\
 &                                 \rotatebox[origin=c]{45}{$\leq$}          & &                \rotatebox[origin=c]{315}{$\leq$} &  \\
 \dim_\text{\H} F                                             & & & &                         \overline{\dim}_\text{B} F. \\
 &                                 \rotatebox[origin=c]{315}{$\leq$}       & &                  \rotatebox[origin=c]{45}{$\leq$} &  \\
& &                                                                        \underline{\dim}_\text{B} F                           & &  
\end{array}
\]
Furthermore,  if $F$ is a \emph{homogeneous} self-similar set, then we have equality of these four dimensions, regardless of separation conditions, see \cite{implicit} or \cite[Corollary 3.3]{techniques}. For more details on the basic properties of box dimension and its interplay with the Hausdorff and packing dimensions, the reader is referred to \cite[Chapter 3]{falconer}.

\section{Results}

In this section we will state our main results.  Fix an IFS $\mathbb{I}=\{ S_{1}, \dots,  S_{N}\}$ where each $S_i$ is a similarity on $(X,d)$, fix a non-empty compact condensation set $C \subseteq X$ and let $s$ denote the similarity dimension of $F_\emptyset$.  Our results concerning upper box dimension will be given in Section \ref{uppersect} and those concerning lower box dimension will be given in Section \ref{lowersect}.
We will write $B(x,r)$ to denote the open ball of radius $r$ centered at $x$.

\subsection{Upper box dimension} \label{uppersect}

In this section we significantly generalise the results in \cite{olseninhom, ninaphd} concerning upper box dimension, which were obtained as Corollaries to results on the $L^q$-dimensions of inhomogeneous self-similar measures.  Our proofs are direct and deal only with sets.  Our first result bounds the upper box dimension of an inhomogeneous self-similar set, without assuming any separation conditions.

\begin{thm} \label{main}
We have
\[
\max \{ \overline{\dim}_\text{\emph{B}} F_\emptyset, \ \overline{\dim}_\text{\emph{B}} C\} \ \leq \ \overline{\dim}_\text{\emph{B}} F_C  \ \leq \ \max \{ s, \ \overline{\dim}_\text{\emph{B}} C\}.
\]
\end{thm}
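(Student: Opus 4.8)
The plan is to prove the two inequalities separately, the lower bound being essentially immediate and the upper bound requiring the bulk of the work. For the lower bound, $\overline{\dim}_{\mathrm B} F_C \ge \max\{\overline{\dim}_{\mathrm B} F_\emptyset, \overline{\dim}_{\mathrm B} C\}$, I would use monotonicity of upper box dimension together with the structural identity \eqref{structure}: since $F_\emptyset \subseteq F_C$ and $C \subseteq F_C$, and $\overline{\dim}_{\mathrm B}$ is monotone under inclusion, both terms on the left are bounded above by $\overline{\dim}_{\mathrm B} F_C$. (One should note that $\overline{\dim}_{\mathrm B} F_C = \overline{\dim}_{\mathrm B} \overline{\mathcal O} = \overline{\dim}_{\mathrm B}\mathcal O$ since upper box dimension is stable under closure, though this is not strictly needed here.)

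For the upper bound, write $t = \max\{s, \overline{\dim}_{\mathrm B} C\}$ and fix any $u > t$; the goal is to show $N_\delta(F_C) \lesssim \delta^{-u}$ for all small $\delta$, which gives $\overline{\dim}_{\mathrm B} F_C \le u$ and hence the result on letting $u \downarrow t$. The key is to cover $F_C = \overline{\mathcal O}$ efficiently using the self-similar structure of the orbital set. Given $\delta$, I would split the words $S_{i_1}\circ\cdots\circ S_{i_k}$ appearing in the definition of $\mathcal O$ according to a stopping-time construction: let $\mathcal A_\delta$ be the (finite) antichain of words $\mathbf i = (i_1,\dots,i_k)$ for which the contraction ratio $c_{\mathbf i} = \mathrm{Lip}(S_{i_1})\cdots\mathrm{Lip}(S_{i_k})$ first drops below $\delta$ (relative to $\mathrm{diam}(C)$, say). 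Every image $S_{\mathbf j}(C)$ in $\mathcal O$ either has a prefix in $\mathcal A_\delta$ — in which case it lies inside $S_{\mathbf i}(C)$ for some $\mathbf i \in \mathcal A_\delta$ with $c_{\mathbf i} \approx \delta$, a set of diameter $\lesssim \delta$, needing $O(1)$ cover sets — or it is of the form $S_{\mathbf i}(S_{\mathbf j'}(C))$ with $\mathbf i$ a proper prefix of some element of $\mathcal A_\delta$; for the latter, $S_{\mathbf j'}(C)$ has diameter $\ge \delta$ and $S_{\mathbf i}$ scales by $c_{\mathbf i} \ge \delta$, so a $(\delta/c_{\mathbf i})$-cover of $C$ pulls forward under $S_{\mathbf i}$ to a $\delta$-cover of $S_{\mathbf i}(S_{\mathbf j'}(C))$. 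Counting, $N_\delta(\mathcal O)$ is bounded by roughly $\sum_{\mathbf i \in \mathcal A_\delta} \big(O(1) + N_{\delta/c_{\mathbf i}}(C)\big) + N_\delta(C)$. Since $|\mathcal A_\delta|$ grows subexponentially in $\log(1/\delta)$ in the sense that $\sum_{\mathbf i \in \mathcal A_\delta} c_{\mathbf i}^s = 1$ (Hutchinson's formula, iterated) forces $|\mathcal A_\delta| \lesssim \delta^{-s}$, and since $c_{\mathbf i} \ge c_{\min}\delta$ gives $N_{\delta/c_{\mathbf i}}(C) \lesssim (c_{\mathbf i}/\delta)^{u} \lesssim \delta^{-(u-s)} c_{\mathbf i}^{u}$ using $u > \overline{\dim}_{\mathrm B} C$, the sum is bounded by $\delta^{-(u-s)}\sum_{\mathbf i} c_{\mathbf i}^{u} \le \delta^{-(u-s)}\sum_{\mathbf i} c_{\mathbf i}^{s} \cdot c_{\min}^{u-s} \cdot (\text{const}) \lesssim \delta^{-(u-s)} \cdot \delta^{-?}$ — here one must be a little careful, and the cleaner route is to bound $N_{\delta/c_{\mathbf i}}(C) \lesssim (c_{\mathbf i}/\delta)^u$ directly and note $\sum_{\mathbf i \in \mathcal A_\delta} c_{\mathbf i}^u \le \sum_{\mathbf i \in \mathcal A_\delta} c_{\mathbf i}^s = 1$ since $u \ge s$ and $c_{\mathbf i} \le 1$, yielding $\sum_{\mathbf i} N_{\delta/c_{\mathbf i}}(C) \lesssim \delta^{-u}$, and likewise $|\mathcal A_\delta| \lesssim \delta^{-s} \le \delta^{-u}$, so $N_\delta(\mathcal O) \lesssim \delta^{-u}$ as required.

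The main obstacle is the bookkeeping in this covering argument: one must handle the finitely many "short" words $\mathbf j$ with $c_{\mathbf j} \ge \delta$ (the orbital images not yet truncated by the stopping time) uniformly, ensuring that the implied constants — coming from $\mathrm{diam}(C)$, from $c_{\min} = \min_i \mathrm{Lip}(S_i)$, and from the covering number bound $N_\varepsilon(C) \le K_u \varepsilon^{-u}$ valid for all $\varepsilon \le 1$ — do not accumulate. Since $F_C$ is compact this is ultimately a finite count at each scale $\delta$, and no separation hypotheses are needed precisely because we are proving an \emph{upper} bound on a \emph{cover} and overlaps only help. The only genuinely delicate point is that $\overline{\dim}_{\mathrm B} C < u$ gives a bound $N_\varepsilon(C) \lesssim \varepsilon^{-u}$ only for $\varepsilon$ below some threshold $\varepsilon_0$; for $\mathbf i \in \mathcal A_\delta$ with $c_{\mathbf i}/\delta$ possibly large this is fine, but one should absorb the finitely many bad scales into the constant, which is routine.
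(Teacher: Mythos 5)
Your proposal follows essentially the same route as the paper: monotonicity (plus closure-stability) for the lower bound, and for the upper bound a decomposition of the orbital set into the images $S_{\mathbf i}(C)$ with $\mathrm{Lip}(S_{\mathbf i})\ge\delta$, covered by rescaled covers of $C$ at cost $\lesssim(\delta/\mathrm{Lip}(S_{\mathbf i}))^{-u}$, together with the deeper images, which are absorbed into the $\lesssim\delta^{-s}$ cylinders $S_{\mathbf i}(X)$ of a $\delta$-stopping, each of diameter $O(\delta)$. The one bookkeeping point to fix: the sum $\sum N_{\delta/c_{\mathbf i}}(C)$ runs over the \emph{short} words $\{\mathbf i : c_{\mathbf i}\ge\delta\}$ (the proper prefixes of your antichain), not over $\mathcal A_\delta$ itself, so the Hutchinson identity $\sum_{\mathbf i\in\mathcal A_\delta}c_{\mathbf i}^{s}=1$ is not the right tool for that term; instead use that $\sum_{\mathbf i\in\mathcal I^*}c_{\mathbf i}^{u}<\infty$ for $u>s$ (a convergent geometric series, the paper's Lemma \ref{sumsize}), which yields the same $\delta^{-u}$ bound and completes the argument.
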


Although the bounds given in Theorem \ref{main} are not tight in general, we can apply them in two useful situations to obtain an exact result.  The following Corollary answers Question \ref{quest} in the affirmative and, in fact, proves something stronger in that the separation conditions can be severely weakened and we can work in an arbitrary compact metric space.

\begin{cor} \label{ques}
Suppose that the IFS, $\mathbb{I}$, satisfies the SOSC.  Then
\[
\overline{\dim}_\text{\emph{B}} F_C = \max \{ \overline{\dim}_\text{\emph{B}} F_\emptyset, \ \overline{\dim}_\text{\emph{B}} C\}.
\]
\end{cor}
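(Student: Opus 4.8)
The plan is to read the result off directly from Theorem~\ref{main} together with the classical fact, already recorded in the introduction, that the SOSC forces the box dimension of the homogeneous attractor to equal the similarity dimension. Concretely, since $\mathbb{I}$ consists of similarities and satisfies the SOSC, the cited results (see \cite{schief2}, or \cite{hutchinson} and \cite[Section 9.3]{falconer} in the Euclidean case) give $\dim_\text{B} F_\emptyset = s$, and in particular $\overline{\dim}_\text{B} F_\emptyset = s$. Substituting this into the conclusion of Theorem~\ref{main} gives
\[
\max\{\overline{\dim}_\text{B} F_\emptyset,\ \overline{\dim}_\text{B} C\} \ \le\ \overline{\dim}_\text{B} F_C \ \le\ \max\{s,\ \overline{\dim}_\text{B} C\} \ =\ \max\{\overline{\dim}_\text{B} F_\emptyset,\ \overline{\dim}_\text{B} C\},
\]
so every inequality must in fact be an equality, which is exactly the assertion of the corollary.

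The only point deserving a moment's care is the invocation of the Schief-type theorem in the generality of an arbitrary compact metric space $(X,d)$ rather than $\mathbb{R}^d$: for this one should cite \cite{schief2}, which is set up for general (complete) metric spaces, rather than Schief's original Euclidean argument. Note also that the subtlety of the SOSC-versus-OSC equivalence — which holds in $\mathbb{R}^d$ but is not claimed in general — does not arise here, since we have assumed the SOSC outright. So there is essentially no obstacle in this corollary: all of the substantive work sits in the proof of Theorem~\ref{main}, and the sole role of the SOSC is to pin $\overline{\dim}_\text{B} F_\emptyset$ at the value $s$ so that the upper and lower bounds of Theorem~\ref{main} collapse onto one another.

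If one preferred a self-contained treatment, the lower bound $\overline{\dim}_\text{B} F_\emptyset \ge s$ could be obtained by hand: using the open set $U$ supplied by the SOSC one builds a Bernoulli-type measure on $F_\emptyset$ with cylinders scaling like $\mathrm{Lip}(S_{i_1})\cdots\mathrm{Lip}(S_{i_k})$, and then applies a standard mass distribution estimate to bound $N_\delta(F_\emptyset)$ from below; the matching upper bound $\overline{\dim}_\text{B} F_\emptyset \le s$ needs no separation at all. However, since this is entirely routine and already in the literature, I would simply quote it rather than reproduce it.
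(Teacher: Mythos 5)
Your proposal is correct and is essentially identical to the paper's own proof: both deduce the corollary immediately from Theorem~\ref{main} together with Schief's result \cite{schief2} that the SOSC forces $\overline{\dim}_\text{B} F_\emptyset = s$, so the two bounds in the theorem coincide. The additional remarks about the metric-space generality and the optional mass-distribution argument are sensible but not needed.
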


\begin{proof}
This follows immediately from Theorem \ref{main} since if $\mathbb{I}$ satisfies the SOSC, then $s = \overline{\dim}_\text{B} F_\emptyset$, see \cite{schief2}.
\end{proof}

Of course, if $X \subseteq \mathbb{R}^d$ then the SOSC is equivalent to the OSC.  We can also obtain an exact result in a generic sense.

\begin{cor} \label{generic}
Let $d \in \mathbb{N}$ and fix linear contracting similarities, $\{ T_1, \dots, T_N\}$, each mapping $\mathbb{R}^d$ to itself, and assume that $\text{\emph{Lip}}(T_i)<1/2$ for all $i$ and fix a compact condensation set $C \subset \mathbb{R}^d$.  For $\mathbf{t} = (t_1, \dots, t_N) \in \times_{i=1}^N \mathbb{R}^d$, let $F_{\mathbf{t}, \emptyset}$ denote the homogeneous attractor satisfying
\[
F_{\mathbf{t}, \emptyset} = \bigcup_{i=1}^{N} \Big(  T_i (F_{\mathbf{t}, \emptyset})+t_i \Big) 
\]
and let $F_{\mathbf{t}, C}$ denote the inhomogeneous attractor satisfying
\[
F_{\mathbf{t}, C} \ = \ \bigcup_{i=1}^{N} \Big( T_i (F_{\mathbf{t},C})+t_i \Big) \ \cup \ C.
\]
Then, writing $\mathcal{L}^{dN}$ for the $N$-fold product of $d$-dimensional Lebesgue measure, we have
\[
\overline{\dim}_\text{\emph{B}} F_{\mathbf{t},C} = \max \{ \overline{\dim}_\text{\emph{B}} F_{\mathbf{t},\emptyset}, \ \overline{\dim}_\text{\emph{B}} C\}
\]
for $\mathcal{L}^{dN}$-almost all $\mathbf{t} = (t_1, \dots, t_N) \in \times_{i=1}^N \mathbb{R}^d$.
\end{cor}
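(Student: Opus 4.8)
The plan is to obtain the corollary by combining Theorem \ref{main} with one classical fact about \emph{generic} self-similar sets. Since translations are isometries, the similarity dimension $s$ determined by $\{T_1,\dots,T_N\}$ (the solution of $\sum_{i=1}^N\text{Lip}(T_i)^s=1$) does not depend on $\mathbf{t}$, and the fact we need is: if $\text{Lip}(T_i)<1/2$ for every $i$, then for $\mathcal{L}^{dN}$-almost every $\mathbf{t}\in\times_{i=1}^N\mathbb{R}^d$ one has $\dim_\text{B} F_{\mathbf{t},\emptyset}=\min\{s,d\}$. Granting this, here is the deduction. Each $F_{\mathbf{t},C}$ is a compact subset of $\mathbb{R}^d$ — being the unique fixed point of the Hausdorff-metric contraction $A\mapsto\bigcup_{i=1}^N(T_i(A)+t_i)\cup C$ on compact sets — so $\overline{\dim}_\text{B} F_{\mathbf{t},C}\le d$ for every $\mathbf{t}$. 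Feeding this together with the upper bound of Theorem \ref{main} into the elementary identity $\min\{\max\{s,b\},d\}=\max\{\min\{s,d\},b\}$, valid whenever $b\le d$ and here applied with $b=\overline{\dim}_\text{B} C$, gives $\overline{\dim}_\text{B} F_{\mathbf{t},C}\le\max\{\min\{s,d\},\overline{\dim}_\text{B} C\}$ for every $\mathbf{t}$. The lower bound of Theorem \ref{main} gives $\overline{\dim}_\text{B} F_{\mathbf{t},C}\ge\max\{\overline{\dim}_\text{B} F_{\mathbf{t},\emptyset},\overline{\dim}_\text{B} C\}$, which for a.e.\ $\mathbf{t}$ equals $\max\{\min\{s,d\},\overline{\dim}_\text{B} C\}$ by the fact. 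Combining these, $\overline{\dim}_\text{B} F_{\mathbf{t},C}=\max\{\min\{s,d\},\overline{\dim}_\text{B} C\}=\max\{\overline{\dim}_\text{B} F_{\mathbf{t},\emptyset},\overline{\dim}_\text{B} C\}$ for a.e.\ $\mathbf{t}$, which is the assertion.

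It remains to recall why $\dim_\text{B} F_{\mathbf{t},\emptyset}=\min\{s,d\}$ for a.e.\ $\mathbf{t}$. Since box and Hausdorff dimension coincide for self-similar sets (noted in the introduction) and $\dim_\H F_{\mathbf{t},\emptyset}\le\min\{s,d\}$ holds for every $\mathbf{t}$, this is the classical statement that a translation-generic self-similar IFS with all ratios $<1/2$ has attractor of the maximal possible dimension. The proof is the standard transversality/energy argument: for fixed $\alpha<\min\{s,d\}$ and a bounded parameter box $Q\subset\times_{i=1}^N\mathbb{R}^d$, bound the expected $\alpha$-energy of a suitable Bernoulli measure $\mu_{\mathbf{t}}$ on $F_{\mathbf{t},\emptyset}$, namely $\int_Q\iint|x-y|^{-\alpha}\,d\mu_{\mathbf{t}}(x)\,d\mu_{\mathbf{t}}(y)\,d\mathbf{t}$, by exchanging the order of integration (Fubini) and invoking the transversality estimate
\[
\mathcal{L}^{dN}\big(\{\mathbf{t}\in Q:|\pi_{\mathbf{t}}(\omega)-\pi_{\mathbf{t}}(\tau)|\le\rho\}\big)\ \lesssim\ \rho^{d}
\]
for distinct codings $\omega,\tau$, where $\pi_{\mathbf{t}}$ is the coding map of $\{T_i(\cdot)+t_i\}$. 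The hypothesis $\text{Lip}(T_i)<1/2$ enters precisely here: after removing the common prefix of $\omega$ and $\tau$, the dependence of $\pi_{\mathbf{t}}(\omega)-\pi_{\mathbf{t}}(\tau)$ on the first remaining translation coordinate is through a matrix that is invertible (close to the identity) exactly because $\text{Lip}(T_i)<1/2$, so integrating that coordinate first yields the factor $\rho^{d}$. Finiteness of the expected energy forces $\mu_{\mathbf{t}}$ to have finite $\alpha$-energy, hence $\dim_\H F_{\mathbf{t},\emptyset}\ge\alpha$, for a.e.\ $\mathbf{t}\in Q$; letting $\alpha\uparrow\min\{s,d\}$ along a sequence and exhausting $\times_{i=1}^N\mathbb{R}^d$ by countably many boxes $Q$ completes the fact (the borderline case $s>d$ being the usual companion one, handled by the $L^2$ variant which shows $\mu_{\mathbf{t}}$ is a.e.\ absolutely continuous).

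The main obstacle is this generic-dimension fact; granting it, the corollary is essentially a one-line consequence of the two-sided bound in Theorem \ref{main} and the trivial estimate $\overline{\dim}_\text{B} F_{\mathbf{t},C}\le d$. In the write-up I would simply cite the fact — it is standard in the literature on transversality and parametrised families of self-similar sets (Falconer, Peres--Schlag, Simon--Solomyak, and others) — rather than reproduce the energy computation, and only spell out that trivial estimate and the elementary identity used above. Two minor points to verify against whichever statement one invokes: that the $T_i$ may include rotations (the transversality estimate above is insensitive to them), and that the conclusion is genuinely for $\mathcal{L}^{dN}$-almost every $\mathbf{t}$ on all of $\times_{i=1}^N\mathbb{R}^d$, which follows from the bounded-box version by the countable exhaustion above.
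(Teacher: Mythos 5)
Your proposal is correct and follows essentially the same route as the paper: combine the two-sided bound of Theorem \ref{main} with the classical Falconer--Solomyak result that for $\mathcal{L}^{dN}$-almost every $\mathbf{t}$ the attractor $F_{\mathbf{t},\emptyset}$ has (box) dimension equal to $\min\{s,d\}$, which the paper simply cites rather than sketching via the energy/transversality argument. Your explicit handling of the case $s>d$ through the trivial bound $\overline{\dim}_\text{B} F_{\mathbf{t},C}\le d$ and the identity $\min\{\max\{s,b\},d\}=\max\{\min\{s,d\},b\}$ is a small but welcome refinement of the paper's terser deduction.
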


\begin{proof}
This follows immediately from Theorem \ref{main} since, for $\mathcal{L}^{dN}$-almost all $\mathbf{t} = (t_1, \dots, t_N) \in \times_{i=1}^N \mathbb{R}^d$, we have that $\overline{\dim}_\text{B} F_{\mathbf{t},\emptyset}$ is equal to the solution of
\[
\sum_{i=1}^N \text{Lip}(T_i)^s = 1
\]
which is also the similarity dimension of $F_{\mathbf{t},\emptyset}$.  This is a special case of a classical result of Falconer and Solomyak on the almost sure dimensions of self-affine sets, see \cite{affine, solomyak}.
\end{proof}

We conclude this section with an open question:
\begin{ques}
Is it true that
\[
\overline{\dim}_\text{\emph{B}} F_C = \max \{ \overline{\dim}_\text{\emph{B}} F_\emptyset, \ \overline{\dim}_\text{\emph{B}} C\}
\]
even if $\overline{\dim}_\text{\emph{B}} F_\emptyset<s$?  In particular, such systems cannot satisfy the SOSC.
\end{ques}

\subsection{Lower box dimension} \label{lowersect}

In this section we examine the lower box dimension.  Theorem \ref{main} gives us the following immediate Corollary which gives (basically trivial) bounds on the lower box dimension.
\begin{cor} \label{trivial bounds}
We have
\[
\max \{ \underline{\dim}_\text{\emph{B}} F_\emptyset , \ \underline{\dim}_\text{\emph{B}} C \} \ \leq \ \underline{\dim}_\text{\emph{B}} F_C  \ \leq  \ \overline{\dim}_\text{\emph{B}} F_C  \   \leq  \   \max \{ s, \ \overline{\dim}_\text{\emph{B}} C \}
\]
and if $\mathbb{I}$ satisfies the SOSC, then
\[
\max \{\underline{\dim}_\text{\emph{B}} F_\emptyset  , \ \underline{\dim}_\text{\emph{B}} C \} \ \leq \ \underline{\dim}_\text{\emph{B}} F_C  \ \leq  \ \overline{\dim}_\text{\emph{B}} F_C  \   \leq  \   \max \{ \underline{\dim}_\text{\emph{B}} F_\emptyset , \ \overline{\dim}_\text{\emph{B}} C \}.
\]
So we can compute the lower box dimension in three easy cases:
\begin{itemize}
\item[(1)] If the box dimension of $C$ exists and $\dim_\text{\emph{B}} C\geq s$, then
\[
\underline{\dim}_\text{\emph{B}} F_C = \overline{\dim}_\text{\emph{B}} F_C =\dim_\text{\emph{B}} C;
\]
\item[(2)] If the box dimension of $C$ exists and $\mathbb{I}$ satisfies the SOSC, then
\[
\underline{\dim}_\text{\emph{B}} F_C = \overline{\dim}_\text{\emph{B}} F_C = \max \{ \dim_\text{\emph{B}} F_\emptyset, \ \dim_\text{\emph{B}} C \};
\]
\item[(3)] If $\mathbb{I}$ satisfies the SOSC and $\overline{\dim}_\text{\emph{B}} C \leq s$, then
\[
\underline{\dim}_\text{\emph{B}} F_C = \overline{\dim}_\text{\emph{B}} F_C =s = \dim_\text{\emph{B}} F_\emptyset.
\]
\end{itemize}
Note that in each of the above cases the answer to Question \ref{quest2} is yes, i.e., 
\[
\underline{\dim}_\text{\emph{B}} F_C = \max \{ \underline{\dim}_\text{\emph{B}}  F_\emptyset, \ \underline{\dim}_\text{\emph{B}} C\}.
\]
\end{cor}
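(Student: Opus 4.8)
The plan is to derive everything from Theorem \ref{main} together with two elementary facts: that lower and upper box dimension are monotone under set inclusion, and that for a homogeneous self-similar set the box dimension always exists (hence equals the similarity dimension under the SOSC, by \cite{schief2}). For the first displayed chain, the rightmost inequality $\overline{\dim}_\text{B} F_C \leq \max\{s,\overline{\dim}_\text{B} C\}$ is exactly the upper bound in Theorem \ref{main}; the middle inequality $\underline{\dim}_\text{B} F_C \leq \overline{\dim}_\text{B} F_C$ is immediate from the definitions; and the leftmost inequality follows from the inclusions $F_\emptyset \subseteq F_C$ (by (\ref{structure})) and $C \subseteq F_C$ (by (\ref{inhom})) together with the monotonicity of $\underline{\dim}_\text{B}$.

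For the second chain, assume the SOSC. Then by \cite{schief2} the similarity dimension $s$ equals $\dim_\text{B} F_\emptyset$, and since the box dimension of the homogeneous self-similar set $F_\emptyset$ exists this gives $s = \underline{\dim}_\text{B} F_\emptyset = \overline{\dim}_\text{B} F_\emptyset$. Substituting $s = \underline{\dim}_\text{B} F_\emptyset$ into the right-hand side of the first chain yields the second chain.

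The three special cases are then obtained by squeezing. In case (1), the existence of $\dim_\text{B} C$ and the hypothesis $\dim_\text{B} C \geq s$ turn the upper bound of the first chain into $\max\{s,\overline{\dim}_\text{B} C\} = \dim_\text{B} C$, while the lower bound gives $\underline{\dim}_\text{B} F_C \geq \underline{\dim}_\text{B} C = \dim_\text{B} C$, forcing equality. In case (2), the existence of $\dim_\text{B} C$ and the SOSC make both ends of the second chain equal to $\max\{\dim_\text{B} F_\emptyset, \dim_\text{B} C\}$. In case (3), the SOSC gives $\underline{\dim}_\text{B} F_\emptyset = \overline{\dim}_\text{B} F_\emptyset = s$, and $\overline{\dim}_\text{B} C \leq s$ makes the upper bound of the second chain equal to $s$, so again both ends coincide, this time at the value $s = \dim_\text{B} F_\emptyset$. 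Finally, for the concluding remark one checks in each case that $\max\{\underline{\dim}_\text{B} F_\emptyset, \underline{\dim}_\text{B} C\}$ equals the common value just computed: in case (1) this uses $\underline{\dim}_\text{B} F_\emptyset \leq \dim_\text{B} F_\emptyset \leq s \leq \dim_\text{B} C$, and in cases (2) and (3) it uses $\underline{\dim}_\text{B} C \leq \overline{\dim}_\text{B} C$ together with the equalities already established. There is no substantive obstacle here; the only points requiring care are invoking the separation-condition-free existence of $\dim_\text{B} F_\emptyset$ and keeping track of which of the two chains of inequalities is being applied in each of the three cases.
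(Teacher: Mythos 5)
Your proposal is correct and follows essentially the same route the paper intends: the corollary is read off from Theorem \ref{main} together with monotonicity of box dimensions under the inclusions $F_\emptyset, C \subseteq F_C$, the identification $s = \dim_\text{B} F_\emptyset$ under the SOSC via \cite{schief2}, and the separation-free existence of $\dim_\text{B} F_\emptyset$, after which the three cases and the concluding remark follow by the squeezing you describe.
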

Even when $\mathbb{I}$ satisfies the SOSC, computing $\underline{\dim}_\text{B} F_C$ appears to be a subtle and difficult problem if $\max \{s, \, \underline{\dim}_\text{B} C \}<\overline{\dim}_\text{B} C$.  We will now briefly outline the reason for this.  Firstly, note that since lower box dimension is stable under taking closures, it follows from (\ref{structure}) that
\[
\underline{\dim}_\text{B} F_C = \underline{\dim}_\text{B} \overline{\mathcal{O}} =  \underline{\dim}_\text{B} \mathcal{O}.
\]
We can thus restrict our attention to the orbital set.  However, computing the dimension of $\mathcal{O}$ is difficult as it consists of copies of $C$ scaled by different amounts.  If the box dimension of $C$ does not exist, then the growth of the function $N_\delta(C)$ can vary wildly as $\delta \to 0$.  It turns out that the lower box dimension of $\mathcal{O}$ depends not only on  $\underline{\dim}_\text{B}$, $\overline{\dim}_\text{B}$ and $s$ but also on the behaviour of the function $\delta \mapsto N_\delta(C)$.  In order to analyse the behaviour of $N_\delta(C)$ we introduce a quantity which we call the \emph{covering regularity exponent} (CRE).  For $t \geq 0$ and $\delta \in (0,1]$, the ($t,\delta$)-CRE of $C$ is defined as
\begin{equation} \label{CREdef1}
p_{t,\delta}(C) = \sup \big\{p \in [0,1] : N_{\delta^p}(C) \geq \delta^{-pt} \big\}
\end{equation}
and the $t$-CRE is
\[
p_t(C) = \liminf_{\delta \to 0} \  p_{t,\delta}(C) .
\]
Roughly speaking, $p_{t,\delta}(C)$ tells you at scale $\delta$ how much you have to `scale up' to find a scale $\delta_0 \geq \delta$ where you need at least $\delta_0^{-t}$ sets to cover $C$, i.e., how far back you have to go to find a scale where the set is `hard' to cover.  In fact, the smaller $p_{t,\delta}(C)$ is, the further you have to go back. The constant $p_{t}(C)$ tells you  the `furthest away' you ever are from a scale where your set is `hard to cover', as you let $\delta$ tend to zero.  The following Lemma gives some simple but useful properties of the CREs.   First, recall that a metric space $(X,d)$ is \emph{Ahlfors regular} if $\dim_\H X<\infty$ and there exists a constant $\lambda>0$ such that, writing $\mathcal{H}^{\dim_\text{\H} X}$ to denote the Hausdorff measure in the critical dimension,
\[
\tfrac{1}{\lambda}\,  r^{\dim_\text{\H} X} \ \leq \  \mathcal{H}^{\dim_\text{\H} X} \big( B(x,r)\big) \  \leq \ \lambda \, r^{\dim_\text{\H} X} 
\]
for all $x \in X$ and all $0<r \leq \text{diam}(F)$.

\begin{lma} \label{keylem} \hspace{1mm}
\begin{itemize} 
\item[(1)] For all $t, \delta> 0$, we have $p_{t,\delta}(C), p_t(C) \in [0,1]$;
\item[(2)] $p_t(C)$ is decreasing in $t$ and if $t<\underline{\dim}_\text{\emph{B}}C$, then $p_t(C)=1$ and if $t>\overline{\dim}_\text{\emph{B}}C$, then $p_t(C)=0$;
\item[(3)] For all $\delta>0$ we have 
\[
N_{\delta^{p_{t,\delta}(C)}}(C) \geq \delta^{-p_{t,\delta}(C)t},
\]
i.e., the supremum in (\ref{CREdef1}) is obtained;
\item[(4)] For all $t > \underline{\dim}_\text{\emph{B}}C$, we have
\[
p_t(C) \ \leq \ \frac{\underline{\dim}_\text{\emph{B}}C}{t} \ < \  1;
\]
\item[(5)] For $\underline{\dim}_\text{\emph{B}}C< s< t < \overline{\dim}_\text{\emph{B}}C$ we have
\[
p_t(C)\  \leq \ \frac{s}{t} \, p_s(C);
\]
\item[(6)] Suppose $X$ is Ahlfors regular.  For all $t \in (\underline{\dim}_\text{\emph{B}}C, \, \overline{\dim}_\text{\emph{B}}C)$, we have
\[
p_t(C) \ \leq \ \frac{\underline{\dim}_\text{\emph{B}}C}{t}  \frac{\overline{\dim}_\text{\emph{B}}X-t}{\overline{\dim}_\text{\emph{B}}X-\underline{\dim}_\text{\emph{B}}C}.
\]
\end{itemize}
\end{lma}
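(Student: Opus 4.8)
Write $b=\underline{\dim}_\text{B}C$ and $B=\overline{\dim}_\text{B}C$, and recall the elementary consequences of these definitions that will be used freely below: $N_\varepsilon(C)<\varepsilon^{-b'}$ holds for arbitrarily small $\varepsilon$ whenever $b'>b$; $N_\varepsilon(C)>\varepsilon^{-t'}$ holds for all small $\varepsilon$ whenever $t'<b$; and $N_\varepsilon(C)<\varepsilon^{-t'}$ holds for all small $\varepsilon$ whenever $t'>B$. Part (1) is immediate: since $C\neq\emptyset$, $N_{\delta^0}(C)=N_1(C)\ge 1=\delta^{0}$, so $0$ always lies in the set defining $p_{t,\delta}(C)$; hence that supremum is a well-defined element of $[0,1]$, and $p_t(C)$ is a $\liminf$ of such numbers. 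For Part (3), fix $t$ and $\delta<1$ (the case $\delta=1$ is trivial) and set $\psi(p)=N_{\delta^{p}}(C)$ and $\chi(p)=\delta^{-pt}$; then $\psi$ is non-decreasing in $p$ (as $p\mapsto\delta^{p}$ decreases and $N$ is non-increasing in the scale) and $\chi$ is continuous, so, writing $p^{\ast}=p_{t,\delta}(C)=\sup\{p:\psi(p)\ge\chi(p)\}$ and choosing a non-decreasing sequence $p_n\to p^{\ast}$ with $\psi(p_n)\ge\chi(p_n)$, either some $p_n$ equals $p^{\ast}$ or $\psi(p^{\ast})\ge\psi(p_n)\ge\chi(p_n)\to\chi(p^{\ast})$; either way $\psi(p^{\ast})\ge\chi(p^{\ast})$, which is the assertion. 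Part (2): $\chi(p)$ is increasing in $t$, so the defining set shrinks as $t$ grows and $p_t(C)$ is decreasing; and choosing $t'$ strictly between $t$ and $b$ (resp.\ $B$), in the first case $N_\delta(C)>\delta^{-t'}\ge\delta^{-t}$ for all small $\delta$, so $p=1$ lies in the defining set and $p_{t,\delta}(C)=1$, giving $p_t(C)=1$, while in the second case, given $\eta\in(0,1]$, for all small $\delta$ one has $N_{\delta^{p}}(C)<\delta^{-pt'}\le\delta^{-pt}$ simultaneously for every $p\in[\eta,1]$, whence $p_{t,\delta}(C)\le\eta$ and so $p_t(C)=0$.

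For Part (4) the idea is that $p_{t,\delta}(C)$ is large only if $C$ is hard to cover at some scale $\delta^{p}$ with $p$ near $1$, so one bounds it above by taking $\delta$ at a scale where $C$ is easy to cover and using that coarser scales are no harder. Fix $b'\in(b,t]$ and a small $\varepsilon$ with $N_\varepsilon(C)<\varepsilon^{-b'}$, and put $\delta=\varepsilon$. For every $p\in[b'/t,1]$ we have $\delta^{p}=\varepsilon^{p}\ge\varepsilon$, hence $N_{\delta^{p}}(C)\le N_\varepsilon(C)<\varepsilon^{-b'}\le\varepsilon^{-pt}=\delta^{-pt}$, so no such $p$ lies in the set defining $p_{t,\varepsilon}(C)$ and therefore $p_{t,\varepsilon}(C)\le b'/t$. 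As there are arbitrarily small such $\varepsilon$ this gives $p_t(C)\le b'/t$, and letting $b'\downarrow b$ gives $p_t(C)\le b/t<1$.

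Part (5) rests on the pointwise estimate $p_{s,\delta}(C)\ge\min\{1,(t/s)\,p_{t,\delta}(C)\}$, valid for every $\delta\in(0,1)$. Indeed, with $p^{\ast}=p_{t,\delta}(C)$ and $q=\min\{1,(t/s)p^{\ast}\}$ we have $q\ge p^{\ast}$ (since $t\ge s$ and $p^{\ast}\le1$) and $qs\le tp^{\ast}$, so by Part (3)
\[
N_{\delta^{q}}(C)\ \ge\ N_{\delta^{p^{\ast}}}(C)\ \ge\ \delta^{-tp^{\ast}}\ \ge\ \delta^{-qs},
\]
which puts $q$ in the set defining $p_{s,\delta}(C)$. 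Multiplying through by $s/t$ gives $(s/t)\,p_{s,\delta}(C)\ge\min\{s/t,\,p_{t,\delta}(C)\}$ for every $\delta$; taking $\liminf_{\delta\to0}$ and using that $u\mapsto\min\{s/t,u\}$ is continuous and non-decreasing (hence commutes with $\liminf$) yields $(s/t)\,p_s(C)\ge\min\{s/t,\,p_t(C)\}$. Since $t>s>b$, Part (4) gives $p_t(C)\le b/t<s/t$, so the right-hand minimum is $p_t(C)$ and we conclude $p_t(C)\le(s/t)\,p_s(C)$.

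Part (6) follows the scheme of Part (4), but Ahlfors regularity lets us also pass to finer scales in a controlled way. Put $D=\overline{\dim}_\text{B}X=\dim_\text{H}X$, so $t<B\le D$; since $X$ is Ahlfors $D$-regular, any set of diameter at most $\varepsilon$ can be covered by at most $K\,(\varepsilon/\varepsilon')^{D}$ sets of diameter at most $\varepsilon'\le\varepsilon$, with $K=K(X)$, so $N_{\varepsilon'}(C)\le K\,N_\varepsilon(C)\,(\varepsilon/\varepsilon')^{D}$ for $\varepsilon'\le\varepsilon$. Fix $b<b'<b''<t$, set $\alpha=(D-b'')/(D-t)>1$, choose a small $\varepsilon$ with $N_\varepsilon(C)<\varepsilon^{-b'}$, and let $\delta=\varepsilon^{\alpha}$. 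For $p\le1/\alpha$ the scale $\delta^{p}=\varepsilon^{\alpha p}$ is coarser than $\varepsilon$ and the Part (4) argument shows $p$ is not in the defining set once $p>b'/(\alpha t)$; for $1/\alpha<p\le1$ the scale is finer than $\varepsilon$ and the covering bound gives $N_{\delta^{p}}(C)<K\,\varepsilon^{-b'+(1-\alpha p)D}$, whose exponent exceeds the exponent $-\alpha pt$ of $\delta^{-pt}=\varepsilon^{-\alpha pt}$ by $D-b'-\alpha p(D-t)\ge b''-b'>0$, so $N_{\delta^{p}}(C)<\delta^{-pt}$ as soon as $\varepsilon$ is small enough (the fixed factor $K$ being absorbed because $\log K/(-\log\varepsilon)\to0$). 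Hence no $p\in(b'/(\alpha t),1]$ lies in the defining set, so $p_{t,\delta}(C)\le b'/(\alpha t)=\tfrac{b'}{t}\cdot\tfrac{D-t}{D-b''}$; as these $\delta$ may be taken arbitrarily small, $p_t(C)\le\tfrac{b'}{t}\cdot\tfrac{D-t}{D-b''}$, and letting $b''\downarrow b'$ and then $b'\downarrow b$ gives the claimed bound. The main point to get right is this last paragraph: arranging that the coarse-scale range $[b'/(\alpha t),1/\alpha]$ and the fine-scale range $(1/\alpha,1]$ together cover all $p\in(b'/(\alpha t),1]$, absorbing the Ahlfors constant $K$ into the freedom to shrink $\varepsilon$, and the slight care (the auxiliary exponent $b''$) that makes the estimate strict at the endpoint $p=1$.
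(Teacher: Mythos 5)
Your proof is correct, and for parts (1)--(4) it is essentially the argument of the paper. Parts (5) and (6) reach the same conclusions by mildly different routes, both valid. For (5), the paper picks good scales $\delta$ at which $p_{s,\delta}(C)$ approximates $p_s(C)$ and then bounds $p_{t,\delta}(C)$ from above; you instead prove the pointwise ``dual'' inequality $p_{s,\delta}(C)\ge\min\{1,(t/s)\,p_{t,\delta}(C)\}$ valid for \emph{every} $\delta$ and pass to the $\liminf$, using part (4) to resolve the minimum. This is cleaner in that it avoids choosing special scales, at the cost of the (correctly justified) commutation of $\liminf$ with $u\mapsto\min\{s/t,u\}$. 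For (6), the paper fixes a good scale $\delta$, defines an exponent $m$ (which incorporates a $\log K/\log\delta$ term to absorb the Ahlfors constant exactly) and derives the identity $p_{t,\delta^m}(C)=p_{t,\delta}(C)/m$; you instead start from the coarse scale $\varepsilon$, set $\delta=\varepsilon^{\alpha}$ with $\alpha=(D-b'')/(D-t)$, and show directly that no $p>b'/(\alpha t)$ lies in the defining set, absorbing $K$ via the strict gap $b''-b'$ and a final limit $b''\downarrow b'\downarrow b$. Your version sidesteps the paper's slightly delicate step of computing $p_{t,\delta^m}(C)$ exactly, while the paper's choice of $m$ makes the bound available without the auxiliary parameter $b''$; the estimates are otherwise the same, and both correctly exploit that for an Ahlfors regular $X$ the covering function can grow at rate at most $(\varepsilon/\varepsilon')^{\overline{\dim}_\text{B}X}$ when passing to finer scales.
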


\begin{figure}[H]
	\centering
	\includegraphics[width=160mm]{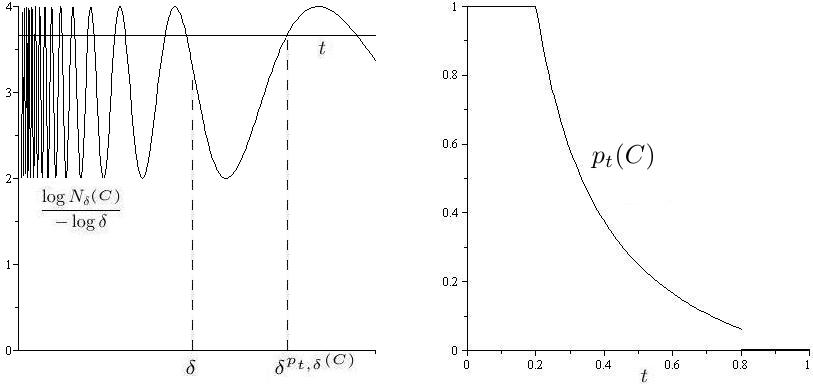}
	\caption{Left: A plot of $\log N_\delta(C)/(-\log\delta)$ for a set $C$ with distinct upper and lower box dimension.  A horizontal line is included at a value $t$ between the upper and lower box dimensions.  At the indicated point, $\delta$, we have that $N_{\delta}(C) <\delta^{-t}$ and so we have to `scale up' to $\delta_0 = \delta^{p_{t,\delta}(C)}$ to find a scale where $N_{\delta_0}(C) \geq\delta_0^{-t}$.
Right: A typical graph of $p_t(C)$ for a set $C$ with lower box dimension 0.2 and upper box dimension 0.8.
}
\end{figure}

We will prove Lemma \ref{keylem} in Section \ref{keylemproof}.  We will now use the CREs to obtain non-trivial bounds on the lower box dimension of $F_C$.  From now on we will assume that we are in the difficult case: $\max \{s, \, \underline{\dim}_\text{B} C \}<\overline{\dim}_\text{B} C$. 
\\ \\
The following theorem gives a \emph{lower bound} on the lower box dimension of $F_C$ and gives some sufficient conditions for the answer to Question \ref{quest2} to be \emph{no}.

\begin{thm} \label{lowerbound}
Suppose $(X,d)$ is Ahlfors regular and that $\mathbb{I}$ together with $C$ satisfies the COSC.  For all $t\geq0$ we have
\[
\underline{\dim}_\text{\emph{B}} F_C \ \geq \ p_{t}(C)\,  t \ + \ (1-p_{t}(C) ) \, s.
\]
In particular, if for some $t > \max\{s, \, \underline{\dim}_\text{\emph{B}}C\}$ we have
\[
p_t(C) > \max\Big\{0, \ \frac{\underline{\dim}_\text{\emph{B}}C-s}{t-s}\Big\},
\]
then
\[
\underline{\dim}_\text{\emph{B}} F_C > \max\{ \underline{\dim}_\text{\emph{B}} F_\emptyset, \,  \underline{\dim}_\text{\emph{B}} C\}.
\]
\end{thm}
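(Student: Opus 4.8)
The plan is as follows. Since lower box dimension is invariant under taking closures, (\ref{structure}) gives $\underline{\dim}_\text{B}F_C=\underline{\dim}_\text{B}\mathcal{O}$, so it suffices to bound $N_\delta(\mathcal{O})$ from below. We may assume $t>s$ and $p:=p_t(C)>0$, since otherwise $p_t(C)\,t+(1-p_t(C))\,s\le s=\dim_\text{B}F_\emptyset\le\underline{\dim}_\text{B}F_C$; here and below we use that the COSC entails the SOSC, so in particular $\dim_\text{B}F_\emptyset=s$. Fix $\varepsilon\in(0,p)$ and a small $\delta>0$, and write $q=p_{t,\delta}(C)\in[0,1]$. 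If $q=1$, then $C\subseteq\mathcal{O}$ and Lemma \ref{keylem}(3) give $N_\delta(\mathcal{O})\ge N_\delta(C)\ge\delta^{-t}=\delta^{-((1-q)s+qt)}$. So assume $q<1$ and put $\rho=\delta^{1-q}\in(0,1)$. Writing $c_i=\text{Lip}(S_i)$, $c_{\min}=\min_ic_i$, and $c_\mathbf{i}=c_{i_1}\cdots c_{i_k}$ for a word $\mathbf{i}=(i_1,\dots,i_k)$, let $\Lambda_\rho=\{\mathbf{i}:c_\mathbf{i}\le\rho<c_{\mathbf{i}^-}\}$ be the stopping antichain, so that $c_\mathbf{i}\in(c_{\min}\rho,\rho]$ for every $\mathbf{i}\in\Lambda_\rho$; since $\sum_ic_i^s=1$ forces $\sum_{\mathbf{i}\in\Lambda_\rho}c_\mathbf{i}^s=1$, we obtain $\#\Lambda_\rho\ge\rho^{-s}$. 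By the COSC, $S_\mathbf{i}(C)\subseteq S_\mathbf{i}(\overline{U})=\overline{S_\mathbf{i}(U)}$ with $\{S_\mathbf{i}(U)\}_{\mathbf{i}\in\Lambda_\rho}$ pairwise disjoint, while $S_\mathbf{i}(C)\subseteq\mathcal{O}$ for every word $\mathbf{i}$ by definition of $\mathcal{O}$.

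Two estimates now yield the key bound. First, since $S_\mathbf{i}$ is a similarity of ratio $c_\mathbf{i}$, a $\delta$-cover of $S_\mathbf{i}(C)$ is the same as a $(\delta/c_\mathbf{i})$-cover of $C$ with $\delta/c_\mathbf{i}<\delta^q/c_{\min}$, so, using that Ahlfors regularity makes $X$ doubling (to compare covering numbers of $C$ at the comparable scales $\delta^q$ and $\delta^q/c_{\min}$),
\[
N_\delta\big(S_\mathbf{i}(C)\big)\ =\ N_{\delta/c_\mathbf{i}}(C)\ \ge\ N_{\delta^q/c_{\min}}(C)\ \ge\ M_0^{-1}N_{\delta^q}(C)\ \ge\ M_0^{-1}\,\delta^{-qt}
\]
for a constant $M_0$ independent of $\delta$, the last inequality by Lemma \ref{keylem}(3). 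Second — and this is the crux — one must pass from a $\delta$-cover of $\bigcup_{\mathbf{i}\in\Lambda_\rho}S_\mathbf{i}(C)$ to a sum of covers of the individual pieces, which requires the pieces to be separated even though the sets $S_\mathbf{i}(U)$ may share boundary. This is where Ahlfors regularity is genuinely needed: fixing a ball $B\subseteq U$, each $S_\mathbf{i}(\overline{U})$ has diameter at most $\rho\,\text{diam}(U)$ while the disjoint sets $S_\mathbf{i}(B)\subseteq S_\mathbf{i}(\overline{U})$ each have $\mathcal{H}^{\dim_\H X}$-measure at least a fixed constant multiple of $\rho^{\dim_\H X}$, so a volume count inside any ball of radius $12\,\text{diam}(U)\rho$ bounds, by an absolute constant $\Delta$, the number of $\mathbf{j}\in\Lambda_\rho$ with $d\big(S_\mathbf{j}(\overline{U}),S_\mathbf{i}(\overline{U})\big)<10\,\text{diam}(U)\rho$. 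Choosing a subfamily $\Lambda'_\rho\subseteq\Lambda_\rho$ that is maximal among those whose images $S_\mathbf{i}(\overline{U})$ are pairwise $\ge10\,\text{diam}(U)\rho$-separated, maximality and this packing bound give $\#\Lambda'_\rho\ge\#\Lambda_\rho/(\Delta+1)\ge\rho^{-s}/(\Delta+1)$. For $\delta$ small, $\delta<10\,\text{diam}(U)\rho$, so any set of diameter $\delta$ meets at most one $S_\mathbf{i}(C)$ with $\mathbf{i}\in\Lambda'_\rho$; splitting a minimal $\delta$-cover of $\mathcal{O}$ according to which (if any) of these copies each element meets and using the first estimate,
\[
N_\delta(\mathcal{O})\ \ge\ \sum_{\mathbf{i}\in\Lambda'_\rho}N_\delta\big(S_\mathbf{i}(C)\big)\ \ge\ \frac{\rho^{-s}}{\Delta+1}\cdot\frac{\delta^{-qt}}{M_0}\ =\ \kappa\,\delta^{-((1-q)s+qt)}
\]
for a constant $\kappa>0$; combined with the $q=1$ case above, this estimate holds for every small $\delta$.

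Taking logarithms, $\log N_\delta(\mathcal{O})/(-\log\delta)\ge(1-q)s+qt+\log\kappa/(-\log\delta)$ with $q=p_{t,\delta}(C)$. Since $\liminf_{\delta\to0}p_{t,\delta}(C)=p$, we have $q\ge p-\varepsilon$ for all small $\delta$, and as $t>s$ the function $q\mapsto(1-q)s+qt$ is increasing, so letting $\delta\to0$ and then $\varepsilon\to0$ gives
\[
\underline{\dim}_\text{B}F_C\ =\ \underline{\dim}_\text{B}\mathcal{O}\ \ge\ (1-p)s+pt\ =\ p_t(C)\,t+(1-p_t(C))\,s,
\]
which is the first assertion. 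For the second, $\dim_\text{B}F_\emptyset=s$, so the quantity to exceed is $\max\{s,\underline{\dim}_\text{B}C\}$; feeding the hypothesis $p_t(C)>\max\{0,(\underline{\dim}_\text{B}C-s)/(t-s)\}$ (recall $t-s>0$) into $\underline{\dim}_\text{B}F_C\ge s+p_t(C)(t-s)$ and distinguishing the cases $\underline{\dim}_\text{B}C\le s$ and $\underline{\dim}_\text{B}C>s$ shows $\underline{\dim}_\text{B}F_C>\max\{\underline{\dim}_\text{B}F_\emptyset,\underline{\dim}_\text{B}C\}$. The one genuinely delicate step is the separation argument in the middle paragraph: a union of disjoint but possibly touching scaled copies of $C$ need not have covering number comparable to the sum of the copies' covering numbers, and it is precisely the Ahlfors-regularity packing estimate that repairs this; the rest is routine bookkeeping with antichains and covering numbers.
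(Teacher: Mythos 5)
Your proof is correct and follows essentially the same route as the paper: reduce to $\mathcal{O}$, decompose over the stopping set at scale comparable to $\delta^{1-p_{t,\delta}(C)}$, use the COSC together with an Ahlfors-regularity volume count to control overlaps, and combine $\lvert\mathcal{I}(\rho)\rvert\gtrsim\rho^{-s}$ with $N_{\delta^{p_{t,\delta}}}(C)\geq\delta^{-p_{t,\delta}t}$. The only (immaterial) differences are that the paper bounds the intersection multiplicity directly via Lemma \ref{openlem} and divides by that constant rather than pruning to a separated subfamily, and it runs the stopping at scale $\delta^{1-p_{t,\delta}}L_{\min}^{-1}$ (splitting off the degenerate case where this exceeds $1$) instead of invoking a doubling constant $M_0$.
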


We will prove Theorem \ref{lowerbound} in Section \ref{lowerproof}.  The next theorem gives an \emph{upper bound} on the lower box dimension of $F_C$ and gives some sufficient conditions for the answer to Question \ref{quest2} to be \emph{yes}.

\begin{thm} \label{upperbound}
For all $t> \max \{ s, \, \underline{\dim}_\text{\emph{B}} C\}$ we have
\[
\underline{\dim}_\text{\emph{B}} F_C \ \leq \ \max\{ t, \, s+p_{t}(C) \, t\}
\]
and, in particular, if $p_t(C) =0$ for $t > \max \{ s, \,  \underline{\dim}_\text{\emph{B}} C\}$, then
\[
\underline{\dim}_\text{\emph{B}} F_C \ \leq \  \max\{ s, \, \underline{\dim}_\text{\emph{B}} C\}
\]
and if, furthermore, the SOSC is satisfied, then
\[
\underline{\dim}_\text{\emph{B}} F_C \ = \ \max\{ \underline{\dim}_\text{\emph{B}} F_\emptyset, \,  \underline{\dim}_\text{\emph{B}} C\}.
\]
\end{thm}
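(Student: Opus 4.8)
The goal is an upper bound $\underline{\dim}_\text{B} F_C \le \max\{t,\, s+p_t(C)\,t\}$ for every $t > \max\{s,\underline{\dim}_\text{B}C\}$. Since lower box dimension is stable under closure, by (\ref{structure}) it suffices to bound $\underline{\dim}_\text{B}\mathcal{O}$, so I would work entirely with the orbital set $\mathcal{O} = C \cup \bigcup_k \bigcup_{i_1,\dots,i_k} S_{i_1}\circ\cdots\circ S_{i_k}(C)$. To show the lower box dimension is at most some value $d$, it is enough to exhibit, for arbitrarily small $\delta$, a covering of $\mathcal{O}$ by roughly $\delta^{-d}$ sets of diameter $\delta$; that is, to find a sequence $\delta \to 0$ along which $N_\delta(\mathcal{O})$ is small.

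First I would set up notation for the cylinder maps: for a finite word $\mathbf{i}=(i_1,\dots,i_k)$ write $S_{\mathbf{i}}=S_{i_1}\circ\cdots\circ S_{i_k}$, with Lipschitz constant $c_{\mathbf{i}}=\prod_j \text{Lip}(S_{i_j})$, and let $c_{\min}=\min_i\text{Lip}(S_i)$, $c_{\max}=\max_i\text{Lip}(S_i)$. The orbital set is covered by $C$ together with the sets $S_{\mathbf{i}}(C)$ over all words. The key structural fact, which drives the whole argument, is that $\mathcal{O}$ admits the quasi-self-similar decomposition $\mathcal{O} = C \cup \bigcup_{i=1}^N S_i(\mathcal{O})$ (this follows from the definition of $\mathcal{O}$ and matches the fixed-point equation for $F_C$). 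Iterating, for any $k$, $\mathcal{O} = \bigcup_{|\mathbf{i}|<k} S_{\mathbf{i}}(C) \cup \bigcup_{|\mathbf{i}|=k} S_{\mathbf{i}}(\mathcal{O})$, and more usefully one truncates at words whose contraction ratio first drops below a given threshold. The strategy: fix a small target scale and choose the scaling cleverly so that the "deep" cylinder copies of $\mathcal{O}$ (which have small diameter) are cheap to cover by a trivial bound, while the "shallow" cylinder copies of $C$ are covered using a scale at which $C$ happens to be easy to cover — and the CRE $p_{t,\delta}(C)$ is precisely the bookkeeping device that locates such a scale.

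Concretely, I would proceed as follows. Given $\delta$ small, by Lemma~\ref{keylem}(3) the scale $\rho = \delta^{p_{t,\delta}(C)}$ satisfies $N_{\rho}(C) \ge \delta^{-p_{t,\delta}(C)\,t} = \rho^{-t}$... but I want the opposite: I want a scale where $C$ is \emph{easy} to cover. The point of the definition of $p_{t,\delta}(C)$ as a supremum is that for $p$ slightly larger than $p_{t,\delta}(C)$ one has $N_{\delta^p}(C) < \delta^{-pt}$, i.e. the scale $\eta := \delta^{p_{t,\delta}(C)+\varepsilon}$ is a scale at which $C$ needs fewer than $\eta^{-t}$ sets. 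So along a subsequence realizing the $\liminf$ defining $p_t(C)$, I get scales $\eta$, comparable to $\delta^{p_t(C)}$ up to subexponential factors, with $N_{\eta}(C) \lesssim \eta^{-t}$. Now cover $\mathcal{O}$ at scale $\delta$: partition words $\mathbf{i}$ into those with $c_{\mathbf{i}} \ge \eta$ ("shallow") and their minimal extensions with $c_{\mathbf{i}} < \eta$ ("deep"). There are at most $\sim \eta^{-s}$ shallow words (since $\sum_{c_{\mathbf{i}}\approx\eta}c_{\mathbf{i}}^s \asymp 1$ by Hutchinson's formula), and for each, $S_{\mathbf{i}}(C)$ has diameter $\asymp c_{\mathbf{i}}\,\text{diam}(C)$, which is at least $\asymp \eta$; covering it at scale $\delta$ costs at most $N_{\delta/c_{\mathbf{i}}}(C)$ sets. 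For the deep words, $S_{\mathbf{i}}(\mathcal{O})$ has diameter $< \eta\,\text{diam}(\mathcal{O})/c_{\min}$, and there are at most $\sim\eta^{-s}$ of them; each can be covered trivially by $\lesssim (\eta/\delta)^{\dim X}$-many $\delta$-sets, or more efficiently one just needs that $N_\delta(S_{\mathbf{i}}(\mathcal{O})) \le N_{\delta/c_{\mathbf{i}}}(\mathcal{O})$ — but at this coarse level one can afford the crude bound and still land at $\max\{t, s + p_t(C)t\}$ after optimizing. Assembling: $N_\delta(\mathcal{O}) \lesssim \eta^{-s}\cdot\big(\max_{\mathbf{i} \text{ shallow}} N_{\delta/c_{\mathbf{i}}}(C)\big) + (\text{deep contribution})$, take $-\log(\cdot)/(-\log\delta)$, use $\eta \approx \delta^{p_t(C)}$ and $N_{\delta/c_{\mathbf{i}}}(C) \le N_\delta(C) \lesssim$ (coarse bound using $\overline{\dim}_\text{B}C$) — and the exponent that emerges is $\le \max\{t,\ s + p_t(C)\,t\}$ once the covering of the shallow copies is done at the favourable scale $\eta$ rather than at $\delta$.

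**Main obstacle.** The delicate point is the double scaling: one must cover $\mathcal{O}$ at the fixed scale $\delta$, yet exploit that $C$ is easy to cover only at the auxiliary scale $\eta \approx \delta^{p_t(C)}$, and the cylinder copies $S_{\mathbf{i}}(C)$ come in a continuum of sizes $c_{\mathbf{i}}$ ranging from $1$ down to $\eta$. Getting the bookkeeping right — so that each copy $S_{\mathbf{i}}(C)$ is covered using $N_{\eta/(\text{something})}(C)$ copies (a scale where $C$ is cheap) rather than blindly at scale $\delta$, and so that the number of copies ($\asymp \eta^{-s}$, or $\asymp c_{\mathbf{i}}^{-s}$ summed) combines with the per-copy cost to give exponent $s + p_t(C)t$ and not something larger — is the crux, and is exactly where the precise form of the CRE (a supremum, giving a one-sided easy-covering estimate) and the hypothesis $t > \max\{s,\underline{\dim}_\text{B}C\}$ (ensuring $p_t(C)<1$, by Lemma~\ref{keylem}(4)) get used. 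The final two "in particular" clauses are then immediate: if $p_t(C)=0$ the bound reads $\underline{\dim}_\text{B}F_C \le \max\{t,s\}$, and letting $t \downarrow \max\{s,\underline{\dim}_\text{B}C\}$ gives $\underline{\dim}_\text{B}F_C \le \max\{s,\underline{\dim}_\text{B}C\}$; under the SOSC, $s=\dim_\text{B}F_\emptyset=\underline{\dim}_\text{B}F_\emptyset$, and combined with the trivial lower bound from Corollary~\ref{trivial bounds} one gets the stated equality.
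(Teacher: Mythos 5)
Your high-level strategy is the right one and is essentially the paper's: pass to the orbital set, use the supremum in the definition of $p_{t,\delta}(C)$ to extract, along a subsequence of scales realising the liminf, a whole range of scales $[\delta,\delta^{p_t(C)+\varepsilon}]$ at which $C$ is cheap to cover, decompose $\mathcal{O}$ into cylinder copies of $C$, and count words via Hutchinson's formula. But the concrete covering scheme you sketch does not deliver the exponent $\max\{t,\,s+p_t(C)\,t\}$, and it fails exactly at the point you flag as the crux. Your two-group partition at the threshold $\eta=\delta^{p_t(C)+\varepsilon}$ is the wrong cut: for a copy $S_{\mathbf{i}}(C)$ the relevant relative scale is $\delta/c_{\mathbf{i}}$, and this lies in the easy range precisely when $c_{\mathbf{i}}\ge\delta^{1-p_t(C)-\varepsilon}$, not when $c_{\mathbf{i}}\ge\delta^{p_t(C)+\varepsilon}$. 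So unless $p_t(C)=1/2$ your ``shallow'' class contains copies whose relative scale lies outside the easy range; for those the only bound you offer is $N_{\delta/c_{\mathbf{i}}}(C)\le N_\delta(C)\lesssim\delta^{-\overline{\dim}_\text{B}C-\varepsilon}$, and multiplying by the $\approx\eta^{-s}$ copies gives exponent $p_t(C)\,s+\overline{\dim}_\text{B}C$, in general strictly larger than the target. Symmetrically, your ``deep'' contribution $\eta^{-s}(\eta/\delta)^{\dim X}$ has exponent $p_t(C)\,s+(1-p_t(C))\dim X$, which also generally exceeds the target (e.g.\ $\dim X=1$, $s=0.3$, $t=0.6$, $p_t(C)=0.5$ gives $0.65>0.6$) and moreover needs a doubling-type hypothesis on $X$ that the theorem does not make; the alternative $N_\delta(S_{\mathbf{i}}(\mathcal{O}))\le N_{\delta/c_{\mathbf{i}}}(\mathcal{O})$ is self-referential and you do not close that loop.

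What is missing is a three-way split of the words, with thresholds at $\delta^{1-p_{t,\delta}(C)-\varepsilon}$ and at $\delta$. Words with $c_{\mathbf{i}}\ge\delta^{1-p_{t,\delta}(C)-\varepsilon}$ have relative scale in the easy range, so $N_{\delta/c_{\mathbf{i}}}(C)\le(\delta/c_{\mathbf{i}})^{-t}$ and summing $c_{\mathbf{i}}^{\,t}$ over all words (Lemma \ref{sumsize}) gives $\lesssim\delta^{-t}$. Words with $c_{\mathbf{i}}<\delta$ are handled through the stopping set $\mathcal{I}(\delta)$ and contribute $\lesssim\delta^{-s}$. The intermediate words, $\delta\le c_{\mathbf{i}}<\delta^{1-p_{t,\delta}(C)-\varepsilon}$, are the ones that actually produce the term $s+p_t(C)\,t$: their relative scale exceeds $\delta^{p_{t,\delta}(C)+\varepsilon}$, so monotonicity of $N$ together with cheapness at the single scale $\delta^{p_{t,\delta}(C)+\varepsilon}$ gives the per-copy bound $\delta^{-(p_{t,\delta}(C)+\varepsilon)t}$, while their number is at most $\lvert\{\mathbf{i}:c_{\mathbf{i}}\ge\delta\}\rvert\le\frac{\log\delta}{\log L_{\max}}\delta^{-s}$ (Lemma \ref{setsize}) --- a count of \emph{all} words down to scale $\delta$, not a stopping set at scale $\eta$. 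The product is $\lesssim\log(1/\delta)\,\delta^{-(s+(p_t(C)+2\varepsilon)t)}$, and combining the three groups yields $N_\delta(\mathcal{O})\lesssim\delta^{-t}+\log(1/\delta)\,\delta^{-(s+(p_t(C)+2\varepsilon)t)}$ along the chosen sequence of $\delta$'s, whence the theorem. Your treatment of the two ``in particular'' clauses is fine once the main bound is in place.
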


We will prove Theorem \ref{upperbound} in Section \ref{upperproof}. We obtain the following (perhaps surprising) corollary in a very special case.
\begin{cor} \label{bzerocor}
If $\underline{\dim}_\text{\emph{B}} C = 0$ and $\mathbb{I}$ satisfies the SOSC, then
\[
\underline{\dim}_\text{\emph{B}} F_C \ = \ \max\{ \underline{\dim}_\text{\emph{B}} F_\emptyset, \,  \underline{\dim}_\text{\emph{B}} C\} = \underline{\dim}_\text{\emph{B}} F_\emptyset = s.
\]
\end{cor}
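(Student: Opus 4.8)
The plan is to read the result off from the machinery already assembled, namely Theorem~\ref{upperbound} and Lemma~\ref{keylem}~(4), after separating one degenerate case. Since $\underline{\dim}_\text{B} C = 0$ we have $\max\{s,\underline{\dim}_\text{B} C\} = s$, so the only dichotomy to consider is whether $\overline{\dim}_\text{B} C \le s$ or $\overline{\dim}_\text{B} C > s$.

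First I would dispose of the case $\overline{\dim}_\text{B} C \le s$. Here $\mathbb{I}$ satisfies the SOSC and $\overline{\dim}_\text{B} C \le s$, so Corollary~\ref{trivial bounds}~(3) immediately gives $\underline{\dim}_\text{B} F_C = s = \dim_\text{B} F_\emptyset$; and since $\underline{\dim}_\text{B} C = 0 \le s = \underline{\dim}_\text{B} F_\emptyset$, this is exactly the claimed identity.

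In the remaining case $\overline{\dim}_\text{B} C > s = \max\{s,\underline{\dim}_\text{B} C\}$ we are in the ``difficult case'' and Theorem~\ref{upperbound} applies. Fix any $t$ with $s < t < \overline{\dim}_\text{B} C$; then $t > \max\{s,\underline{\dim}_\text{B} C\}$ and, crucially, $t > \underline{\dim}_\text{B} C = 0$, so Lemma~\ref{keylem}~(4) gives $p_t(C) \le \underline{\dim}_\text{B} C / t = 0$, i.e.\ $p_t(C) = 0$. The special case of Theorem~\ref{upperbound} with $p_t(C) = 0$ then yields $\underline{\dim}_\text{B} F_C \le \max\{s,\underline{\dim}_\text{B} C\} = s$, and, the SOSC being in force, the further conclusion of that theorem gives $\underline{\dim}_\text{B} F_C = \max\{\underline{\dim}_\text{B} F_\emptyset,\ \underline{\dim}_\text{B} C\}$. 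Finally, under the SOSC the homogeneous attractor satisfies $\underline{\dim}_\text{B} F_\emptyset = \dim_\text{B} F_\emptyset = s$, so $\underline{\dim}_\text{B} F_C = s = \max\{\underline{\dim}_\text{B} F_\emptyset,\ \underline{\dim}_\text{B} C\}$, as claimed.

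I do not expect a genuine obstacle: all the substance is hidden in Lemma~\ref{keylem}~(4) (which forces $p_t(C) = 0$ as soon as the lower box dimension of $C$ vanishes) and in Theorem~\ref{upperbound}. The only point requiring a little care is the case split above — one must not invoke Theorem~\ref{upperbound} when $\overline{\dim}_\text{B} C \le s$, since that theorem is stated under the standing assumption $\max\{s,\underline{\dim}_\text{B} C\} < \overline{\dim}_\text{B} C$ — but that case is settled at once by Corollary~\ref{trivial bounds}~(3). A remark worth including is why the corollary is mildly surprising: although $\mathcal{O}$ contains copies of $C$ at all scales, at the scales where $C$ is genuinely hard to cover it has already become easy to cover by the time it is rescaled down inside the orbital set, and the vanishing of $p_t(C)$ quantifies precisely this collapse, so no extra dimension is contributed.
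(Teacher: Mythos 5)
Your argument is correct and is essentially the paper's own proof: Lemma \ref{keylem} (4) forces $p_t(C)=0$ for all $t>0$ (since $\underline{\dim}_\text{B} C=0$), and then the special case of Theorem \ref{upperbound} together with the SOSC gives $\underline{\dim}_\text{B} F_C = \max\{\underline{\dim}_\text{B} F_\emptyset,\ \underline{\dim}_\text{B} C\} = s$. The only difference is your explicit case split handling $\overline{\dim}_\text{B} C \le s$ via Corollary \ref{trivial bounds} (3), a harmless extra precaution regarding the section's standing assumption $\max\{s,\underline{\dim}_\text{B} C\}<\overline{\dim}_\text{B} C$, which the paper's one-line proof simply elides.
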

\begin{proof}
This follows immediately from Theorem \ref{upperbound} since Lemma \ref{keylem} (4) gives that $p_t(C) = 0$ for all $t>0$.
\end{proof}

The following Proposition proves the existence of compact sets with the extremal behaviour described in Theorems \ref{lowerbound}--\ref{upperbound}.  In particular, Proposition \ref{exlem} (2) combined with Theorem \ref{lowerbound} gives a negative answer to Question \ref{quest2}.

\begin{prop} \label{exlem}  Let $X = [0,1]^d$ for some $d \in \mathbb{N}$.
\begin{itemize} 
\item[(1)]  For all $0<b<t<B\leq d$, there exists a compact set $C \subseteq X$ such that $\underline{\dim}_\text{\emph{B}}C = b< B = \overline{\dim}_\text{\emph{B}}C$ and $p_t(C) = 0$ for all $t\geq b$;
\item[(2)] For all $0<b<B\leq d$, there exists a compact set $C\subseteq X$ such that $\underline{\dim}_\text{\emph{B}}C = b< B = \overline{\dim}_\text{\emph{B}}C$ and
\[
p_t(C)=\frac{b}{t} \ \frac{d-t}{d-b}.
\]
for all $t \in (b,B)$.  In particular, such a $C$ shows that the upper bound in Lemma \ref{keylem} (6) is sharp.
\end{itemize}
\end{prop}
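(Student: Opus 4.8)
The plan is to construct both sets as Cantor-type subsets of $[0,1]^d$ built by a nested sequence of cubes, where the number of cubes retained at each level is chosen so that the counting function $N_\delta(C)$ oscillates between the slopes $b$ and $B$ in a controlled way. Concretely, I would work with dyadic (or $M$-adic) scales $\delta_k = 2^{-k}$ and specify $C$ by a sequence $n_0 \le n_1 \le n_2 \le \cdots$, where at ``time'' $k$ the set $C$ is covered by roughly $2^{n_k}$ cubes of side $2^{-k}$, and $N_{2^{-k}}(C) \asymp 2^{n_k}$. The lower and upper box dimensions are then $\liminf_k n_k/k$ and $\limsup_k n_k/k$, so one designs the increments so that $n_k/k$ spends long stretches near $b$ (flat stretches where $C$ is locally like a single point, $n_{k+1}=n_k$) and long stretches near $B$ (steep stretches where $C$ fills in at the maximal rate allowed, $n_{k+1}=n_k+d$, truncated so as never to exceed what fits in a cube). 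The standard discretisation lemma relating $N_\delta(C)$ at general $\delta$ to the dyadic values lets me pass from this skeleton to a genuine compact set and compute $p_t(C)$ from the definition $p_{t,\delta}(C)=\sup\{p : N_{\delta^p}(C)\ge \delta^{-pt}\}$.

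For part (1), I want $p_t(C)=0$ for every $t\ge b$, i.e. for every target exponent $t>b$ the scales where $C$ is ``hard to cover'' (where $\log N_\delta(C)/(-\log\delta)\ge t$) recede arbitrarily far back: given $\delta$, the largest $\delta_0\ge\delta$ with $N_{\delta_0}(C)\ge \delta_0^{-t}$ satisfies $\delta_0 = \delta^{o(1)}$ along a subsequence of $\delta\to 0$. This is achieved by making the flat stretches super-exponentially long: if the construction is flat (constant $n_k$) on a block of levels $[k_j, K_j]$ with $K_j/k_j\to\infty$, then at scale $\delta = 2^{-K_j}$ one must scale all the way up past $k_j$ — roughly to $\delta_0 = 2^{-k_j} = \delta^{k_j/K_j}$, giving $p_{t,\delta}(C)\to 0$. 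Interleaving these flat blocks with steep blocks long enough to push $n_k/k$ back up near $B$ gives $\underline{\dim}_{\mathrm B}C=b$, $\overline{\dim}_{\mathrm B}C=B$, and $p_t(C)=0$ for all $t\ge b$; for $t<b$ Lemma \ref{keylem}(2) already gives $p_t(C)=1$, and the case $t=b$ is handled by the same flat-block estimate.

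For part (2), I need the exact value $p_t(C)=\frac{b}{t}\frac{d-t}{d-b}$ for $t\in(b,B)$, so here the oscillation must be ``self-similar'': the ratio of the length of a flat block to the preceding steep block must be constant rather than growing. The natural choice is a single geometric pattern repeated at all scales — steep blocks on which $n_k$ rises from some value to the cap imposed by the ambient cube (slope $d$) followed by flat blocks (slope $0$), with the two block lengths in a fixed ratio $\alpha:(1-\alpha)$ tuned so that the average slope over a full period equals... well, the period-average controls the box dimensions ($b$ and $B$ come from the endpoints of the sawtooth, $B\le d$ being the requirement that steep blocks actually reach slope $d$). Then for a target $t$, the worst scale $\delta=2^{-K}$ sits at the end of a flat block, and scaling up to the nearest earlier scale with counting exponent $\ge t$ lands a fixed proportion of the way back; a direct computation of that proportion in terms of $\alpha, b, B, d, t$ must be arranged to equal $\frac{b}{t}\frac{d-t}{d-b}$. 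I expect the main obstacle to be exactly this: reverse-engineering the sawtooth parameters (block lengths and the level counts $n_k$ within a period) so that the resulting $p_t(C)$ is not merely bounded by but equal to the stated rational function of $t$ uniformly in $t\in(b,B)$, and verifying that the $\liminf$ over all $\delta$ (not just the dyadic endpoints) is attained at the block endpoints and not somewhere in the interior of a block. Once that combinatorial bookkeeping is pinned down, matching against Lemma \ref{keylem}(6) to conclude sharpness is immediate since $\overline{\dim}_{\mathrm B}X = d$ for $X=[0,1]^d$.
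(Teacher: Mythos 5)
Your construction is the same as the paper's: nested dyadic cubes where each level either keeps one sub\-cube per parent (flat, slope $0$) or all sub\-cubes (steep, slope $d$), so that $\log_2 M_{2^{-k}}(C)$ traces a piecewise linear path whose $\liminf$ and $\limsup$ slopes give the two box dimensions. Your part (1) is essentially the paper's proof: flat blocks $[k_j,K_j]$ with $K_j/k_j\to\infty$ (the paper takes $K_j=k_j^2$) force $p_{b,\delta}(C)\le k_j/K_j\to 0$, and monotonicity of $p_t$ in $t$ (Lemma \ref{keylem} (2)) handles $t>b$. That part is fine.

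For part (2) there is a genuine gap: you explicitly defer the one step that constitutes the proof. Two remarks. First, there is no parameter $\alpha$ to tune: once you decide the sawtooth must oscillate between the lines $y=bk$ and $y=Bk$ using only slopes $0$ and $d$, the asymptotic block ratios are forced ($k_{2n}/k_{2n+1}\to b/B$ for the flat blocks and $k_{2n+1}/k_{2(n+1)}\to (d-B)/(d-b)$ for the steep ones), and the upper bound $p_t(C)\le \frac{b}{t}\frac{d-t}{d-b}$ is then free from Lemma \ref{keylem} (6) since $[0,1]^d$ is Ahlfors regular — you do not need to engineer it. What must actually be proved is the matching \emph{lower} bound on $p_t(C)$, and that is the bookkeeping you label an ``obstacle'' and do not carry out. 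Concretely: around each peak $k_{2n}$ one has $N_\delta(C)\ge\delta^{-t}$ on a whole window of scales $\delta\in[2^{-\overline{k_{2n}}},2^{-\underline{k_{2n}}}]$ with $\underline{k_{2n}}\approx\frac{d-B}{d-t}k_{2n}$ (the window opens during the steep ascent, before the peak) and $\overline{k_{2n}}\approx\frac{B}{t}k_{2n}$ (it persists into the flat stretch after the peak, while $\delta^{-t}$ catches up); the worst scale is $\delta=2^{-\underline{k_{2(n+1)}}}$, from which one must scale up to $2^{-\overline{k_{2n}}}$, and the product of the forced ratios gives $\liminf_n \overline{k_{2n}}/\underline{k_{2(n+1)}}=\frac{B}{t}\cdot\frac{d-t}{d-B}\cdot\frac{b}{B}\cdot\frac{d-B}{d-b}=\frac{b}{t}\frac{d-t}{d-b}$. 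One also needs the a priori two-sided bound $2^{kb-b}\le M_{2^{-k}}(C)\le 3^d2^d2^{kB}$ valid at \emph{every} level $k$ to rule out worse behaviour inside blocks and to get $\underline{\dim}_{\mathrm B}C=b$, $\overline{\dim}_{\mathrm B}C=B$ exactly. Without this computation the claimed identity for $p_t(C)$ is asserted, not proved.
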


We will prove Proposition \ref{exlem} in Section \ref{exlemproof}.  Although we specialise to the case where $X$ is the unit cube, the result applies in much more general situations.  However, as we only require them to provide examples, we omit any further technical details.
\\ \\
The case where the condensation set is constructed as in Proposition \ref{exlem} (2) is an interesting case.  Not only does it provide a negative answer to Question \ref{quest2} but we also obtain an explicit (non-trivial) formula for $p_t(C)$.   We obtain the following corollary in this situation.

\begin{cor}
Let $X = [0,1]^d$, let $\mathbb{I}=\{ S_{1}, \dots,  S_{N}\}$ be an IFS of similarities on $X$ and fix a non-empty compact set $C\subset[0,1]^d$ such that
\[
p_t(C)=\frac{\underline{\dim}_\text{\emph{B}}C}{t} \frac{d-t}{d-\underline{\dim}_\text{\emph{B}}C}
\]
for all $t \in (\underline{\dim}_\text{\emph{B}}C , \,  \overline{\dim}_\text{\emph{B}}C)$.  Furthermore assume that $\mathbb{I}$ together with $C$ satisfies the COSC.  Then
\[
\frac{\underline{\dim}_\text{\emph{B}}C}{t}  \frac{d-t}{d-\underline{\dim}_\text{\emph{B}}C} \ ( t \,  -  \,  s ) \, + \,  s \ \leq \ \underline{\dim}_\text{\emph{B}} F_C \ \leq \ \max\Big\{ t, \ s \, + \, \underline{\dim}_\text{\emph{B}}C  \frac{d-t}{d-\underline{\dim}_\text{\emph{B}}C}\Big\}
\]
for all $t \in (\underline{\dim}_\text{\emph{B}}C , \, \overline{\dim}_\text{\emph{B}}C)$.
\end{cor}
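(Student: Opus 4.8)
The plan is to obtain both inequalities by substituting the hypothesised closed form for $p_t(C)$ into the two main estimates of this section, Theorem \ref{lowerbound} and Theorem \ref{upperbound}. Before doing so I would record the ingredients that make those estimates available: the cube $X=[0,1]^d$ with the Euclidean metric is Ahlfors regular, with $\dim_\H X=\overline{\dim}_\text{B} X=d$ (standard, since the $d$-dimensional Hausdorff measure is comparable to Lebesgue measure on balls inside $[0,1]^d$), so together with the assumed COSC the hypotheses of Theorem \ref{lowerbound} are met; Theorem \ref{upperbound} needs no separation or regularity hypotheses at all. Note also that prescribing $p_t(C)$ on the non-degenerate interval $(\underline{\dim}_\text{B} C,\overline{\dim}_\text{B} C)$ forces the ``difficult case'' $\max\{s,\underline{\dim}_\text{B} C\}<\overline{\dim}_\text{B} C$, and in particular forces $\underline{\dim}_\text{B} C<d$, so that the factor $\tfrac{d-t}{d-\underline{\dim}_\text{B} C}$ occurring below is well defined and positive.

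For the lower bound, fix $t\in(\underline{\dim}_\text{B} C,\overline{\dim}_\text{B} C)$. Theorem \ref{lowerbound}, rearranged, gives $\underline{\dim}_\text{B} F_C\geq p_t(C)(t-s)+s$, and inserting $p_t(C)=\tfrac{\underline{\dim}_\text{B} C}{t}\tfrac{d-t}{d-\underline{\dim}_\text{B} C}$ produces the left-hand inequality of the corollary with no further work. (For $t\leq s$ this particular estimate is in fact weaker than the trivial bound $\underline{\dim}_\text{B} F_C\geq s$, which holds because COSC implies the SOSC and hence $\dim_\text{B} F_\emptyset=s$; but it is still a true inequality, which is all the corollary claims.)

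For the upper bound, the one point needing attention is that Theorem \ref{upperbound} is stated only for $t>\max\{s,\underline{\dim}_\text{B} C\}$, whereas the corollary ranges over all $t\in(\underline{\dim}_\text{B} C,\overline{\dim}_\text{B} C)$. When $t>\max\{s,\underline{\dim}_\text{B} C\}$ I would simply substitute $p_t(C)\,t=\underline{\dim}_\text{B} C\,\tfrac{d-t}{d-\underline{\dim}_\text{B} C}$ into $\underline{\dim}_\text{B} F_C\leq\max\{t,\,s+p_t(C)\,t\}$, which is the claimed right-hand inequality verbatim. The only remaining range is $\underline{\dim}_\text{B} C<t\leq s$, which is non-empty only if $\underline{\dim}_\text{B} C<s$; there I would apply the case just proved at parameters $t'$ decreasing to $s$ and pass to the limit, obtaining $\underline{\dim}_\text{B} F_C\leq s+\underline{\dim}_\text{B} C\,\tfrac{d-s}{d-\underline{\dim}_\text{B} C}$, which is at most $s+\underline{\dim}_\text{B} C\,\tfrac{d-t}{d-\underline{\dim}_\text{B} C}=\max\{t,\,s+\underline{\dim}_\text{B} C\,\tfrac{d-t}{d-\underline{\dim}_\text{B} C}\}$ for $t\leq s$, because $\tfrac{d-t}{d-\underline{\dim}_\text{B} C}$ is non-increasing in $t$. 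In summary, there is no genuine obstacle here: the corollary is a direct substitution into Theorems \ref{lowerbound} and \ref{upperbound}, the only care needed being the verification of Ahlfors regularity of the cube and the short limiting argument reconciling the two theorems' admissible ranges of $t$.
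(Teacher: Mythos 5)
Your proposal is correct and is essentially the paper's own (implicit) argument: the corollary is intended as a direct substitution of the assumed formula for $p_t(C)$ into Theorem \ref{lowerbound} (available since $[0,1]^d$ is Ahlfors regular with $\overline{\dim}_\text{B}X=d$ and the COSC is assumed) and Theorem \ref{upperbound}, and your extra care about the range $\underline{\dim}_\text{B}C<t\leq s$ via monotonicity of $t\mapsto\frac{d-t}{d-\underline{\dim}_\text{B}C}$ is a sensible way to cover the values of $t$ not literally admissible in Theorem \ref{upperbound}. One small inaccuracy: prescribing $p_t(C)$ on a non-degenerate interval forces $\underline{\dim}_\text{B}C<\overline{\dim}_\text{B}C$ but does \emph{not} force $s<\overline{\dim}_\text{B}C$ (as $s$ is determined by the IFS alone); that inequality is the section's standing ``difficult case'' assumption, and without it your limiting argument has no admissible $t'>s$ --- though in that degenerate case the upper bound follows trivially from Theorem \ref{main}, since then $\underline{\dim}_\text{B}F_C\leq\max\{s,\overline{\dim}_\text{B}C\}=s$.
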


Write $L(t)$ and $U(t)$ for the lower and upper bounds for $\underline{\dim}_\text{B} F_C$ given in the above Corollary.  We will now provide a plot of these as functions of $t$ in two typical situations.  Of course the best lower and upper bounds for $\underline{\dim}_\text{B} F_C$ are really the supremum and infimum of $L(t)$ and $U(t)$ respectively. In both cases we let $X=[0,1]^5$.  For the plot on the left, we let $\underline{\dim}_\text{B} C=1$, $s=1.5$ and $\overline{\dim}_\text{B} C=4.5$.  For the plot on the right, we let $\underline{\dim}_\text{B} C=s=1$ and $\overline{\dim}_\text{B} C=2$.  In the first case the trivial bounds bounds from Corollary \ref{trivial bounds} have been improved from $[1.5,  4.5]$ to $[1.756,  2.2]$ and in the second case the trivial bounds have been improved from $[1,  2]$ to $[1.375,   1.8]$.

\begin{figure}[H]
	\centering
	\includegraphics[width=155mm]{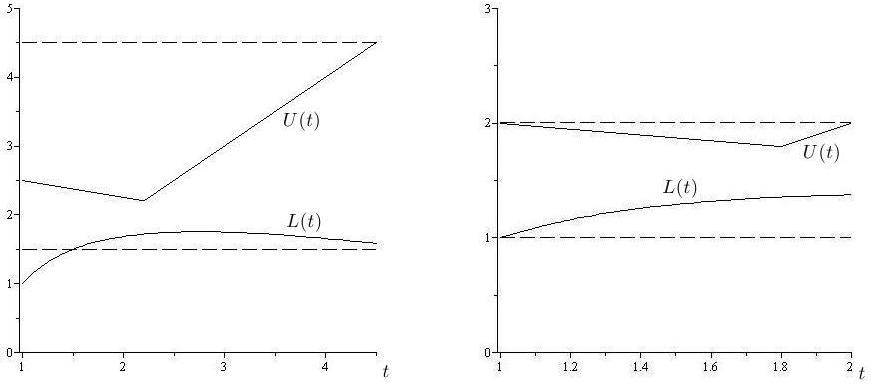}
	\caption{Two graphs showing the upper and lower bounds on the lower box dimension of $F_C$.  $U(t)$ and $L(t)$ are plotted as solid lines and the trivial bounds from Corollary \ref{trivial bounds} are plotted as dashed lines.  We can clearly see a significant improvement on the trivial bounds and in both cases $\underline{\dim}_\text{B} F_C > \max\{ \underline{\dim}_\text{B} F_\emptyset, \,  \underline{\dim}_\text{B} C \}$. 
}
\end{figure}

We will present one final corollary which summarises the `bad behaviour' of the lower box dimension of inhomogeneous self-similar sets.

\begin{cor}
Regardless of separation conditions, the lower box dimension of $F_C$ is not in general given by a function of the numbers:
\[
\underline{\dim}_\text{\emph{B}} C, \ \overline{\dim}_\text{\emph{B}} C, \ \dim_\text{\emph{H}} C, \ \dim_\text{\emph{P}} C, \ \dim_\text{\emph{B}} F_\emptyset \text{  and  } \ s.
\]
This is in stark contrast to the situation for the countably stable dimensions and the upper box dimension.
\end{cor}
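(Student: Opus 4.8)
The plan is to exhibit two inhomogeneous self-similar sets $F_{C_1}$ and $F_{C_2}$, sharing the same values of all six listed invariants, but with distinct lower box dimensions. The natural strategy is to fix a single IFS $\mathbb{I}=\{S_1,\dots,S_N\}$ of similarities on $X=[0,1]^d$ satisfying the SOSC (hence the COSC, after choosing the open set appropriately), with similarity dimension $s$, and to vary only the condensation set $C$ among sets having prescribed, identical Hausdorff, packing, upper and lower box dimensions but different covering regularity exponents $p_t(C)$. Since Theorem \ref{lowerbound} (a lower bound in terms of $p_t(C)$) and Theorem \ref{upperbound} (an upper bound in terms of $p_t(C)$) pin $\underline{\dim}_\text{B} F_C$ to an interval whose location genuinely depends on $t\mapsto p_t(C)$, it suffices to arrange two condensation sets whose $p_t$ functions force disjoint such intervals.

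The concrete construction would run as follows. First, choose parameters $0<b<B\le d$ with $b<s<B$, and pick $\mathbb{I}$ of SOSC type with similarity dimension $s$; one can take, e.g., a standard grid IFS in $[0,1]^d$ and shrink/translate so that the COSC open set $U$ contains a small cube in which both condensation sets will be placed. For the first condensation set, take $C_1$ to be a compact set as in Proposition \ref{exlem}~(1): $\underline{\dim}_\text{B}C_1=b$, $\overline{\dim}_\text{B}C_1=B$, and $p_t(C_1)=0$ for all $t\ge b$. By Theorem \ref{upperbound}, $\underline{\dim}_\text{B}F_{C_1}\le\max\{s,\underline{\dim}_\text{B}C_1\}=s$, and combined with Corollary \ref{trivial bounds} this yields $\underline{\dim}_\text{B}F_{C_1}=s$. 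For the second, take $C_2$ as in Proposition \ref{exlem}~(2), with the \emph{same} $b$ and $B$, so that $p_t(C_2)=\tfrac{b}{t}\cdot\tfrac{d-t}{d-b}$ on $(b,B)$. One must then check that for a suitable choice of $t\in(s,B)$ the hypothesis of Theorem \ref{lowerbound} holds, namely $p_t(C_2)>\max\{0,(\underline{\dim}_\text{B}C_2-s)/(t-s)\}$; since $b=\underline{\dim}_\text{B}C_2<s$ this reduces to $p_t(C_2)>0$, which holds for every $t\in(b,B)$, so Theorem \ref{lowerbound} gives $\underline{\dim}_\text{B}F_{C_2}\ge p_t(C_2)t+(1-p_t(C_2))s>s$ for that $t$. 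Hence $\underline{\dim}_\text{B}F_{C_1}=s<\underline{\dim}_\text{B}F_{C_2}$.

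It remains to verify that $C_1$ and $C_2$ agree on all six invariants. By construction $\underline{\dim}_\text{B}C_1=\underline{\dim}_\text{B}C_2=b$ and $\overline{\dim}_\text{B}C_1=\overline{\dim}_\text{B}C_2=B$, and $\dim_\text{B}F_\emptyset$ and $s$ are determined by $\mathbb{I}$, which is fixed. For the Hausdorff and packing dimensions, I would build the sets in Proposition \ref{exlem} as Cantor-type sets with rapidly alternating ratios of contraction so that both have $\dim_\text{H}=\underline{\dim}_\text{B}$ of the set in question; in fact one can make $\dim_\text{H}C_1=\dim_\text{H}C_2=b$ and, by inserting along a sparse subsequence of scales enough structure to boost packing dimension to $B$ without affecting lower box dimension, $\dim_\text{P}C_1=\dim_\text{P}C_2=B$ (packing dimension being bounded above by upper box dimension, this is the maximal possible value and is the natural target). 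If matching $\dim_\text{H}$ and $\dim_\text{P}$ simultaneously proves awkward, an alternative is to note that the Corollary only requires the lower box dimension not to be \emph{a function of} these six numbers, so it is enough that the six-tuples coincide; one designs $C_1,C_2$ so that each has $\dim_\text{H}$ equal to its lower box dimension $b$ and $\dim_\text{P}$ equal to its upper box dimension $B$, which is exactly the generic behaviour of the homogeneous-Moran-type constructions underlying Proposition \ref{exlem}.

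The main obstacle I anticipate is precisely this last bookkeeping step: ensuring that the two condensation sets furnished by Proposition \ref{exlem}~(1) and~(2) can be taken to have identical Hausdorff and packing dimensions (ideally $b$ and $B$ respectively), rather than merely identical box dimensions. This is a statement about the constructions in Section \ref{exlemproof} rather than about the dimension theory of $F_C$, and it should follow by inspecting those constructions — they are built by prescribing $N_\delta$ along a carefully chosen sequence of scales, and the Hausdorff/packing dimensions of such Moran sets are controlled by the same data — but it does require that one either states Proposition \ref{exlem} with this extra information or adds a short remark to its proof. A secondary, minor point is checking the COSC can indeed be arranged simultaneously for $\mathbb{I}$ together with each of $C_1$ and $C_2$; since both sets live in a prescribed small cube contained in $\overline{U}$, this is immediate once $U$ is chosen with that cube in its closure.
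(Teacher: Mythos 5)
Your proposal is correct and is essentially the paper's (implicit) argument: the paper's proof of this corollary is simply ``this follows from the results in this section'', meaning exactly the combination of Proposition \ref{exlem} (parts (1) and (2) with the same $b<s<B$) with Theorems \ref{lowerbound} and \ref{upperbound} that you carry out. The one point you flag --- matching $\dim_{\text{H}}$ and $\dim_{\text{P}}$ for the two condensation sets --- is benign, since both grid/Moran constructions in the proof of Proposition \ref{exlem} automatically satisfy $\dim_{\text{H}}C=\underline{\dim}_{\text{B}}C=b$ and $\dim_{\text{P}}C=\overline{\dim}_{\text{B}}C=B$ (the branching data controls the Hausdorff dimension, and every relatively open subset has upper box dimension $B$, forcing the packing dimension up to $B$).
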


\begin{proof}
This follows from the results in this section.
\end{proof}

\section{Proofs}

\subsection{Preliminary results and notation}

Fix an IFS $\mathbb{I}=\{ S_{1}, \dots,  S_{N}\}$ where each $S_i$ is a similarity and fix a compact condensation set, $C\subseteq X$.  Write $\mathcal{I} = \{1, \dots, N\}$, $L_{\min} = \min_{i \in \mathcal{I}} \text{Lip}(S_i)$ and $L_{\max} = \max_{i \in \mathcal{I}} \text{Lip}(S_i)$.  Let
\[
\mathcal{I}^* = \bigcup_{k \in \mathbb{N} } \mathcal{I}^k
\]
denote the set of all finite words over $\mathcal{I}$.  For $\textbf{i}  = (i_1, \dots, i_k) \in \mathcal{I}^*$, write $S_\textbf{i} = S_{i_1} \circ \cdots \circ S_{i_k}$, let $\textbf{i}_-  = (i_1, \dots, i_{k-1})$ and write $\lvert \textbf{i} \rvert = k$ to denote the length of the string $\textbf{i}$.  For $\delta \in (0,1]$, define a $\delta$-stopping, $\mathcal{I}(\delta)$, by
\[
\mathcal{I}(\delta) = \big\{ \textbf{i} \in \mathcal{I}^* : \text{Lip}(S_\textbf{i}) < \delta \leq \text{Lip}(S_{ \textbf{i}_-}) \big\},
\]
where we assume for convenience that $ \text{Lip}(S_{\omega}) = 1$, where $\omega$ is the empty word.
\begin{lma} \label{stopsize}
For all $\delta \in (0,1]$, we have
\[
\delta^{-s} \ \leq \  \lvert  \mathcal{I}(\delta) \rvert  \ \leq \ L_{\min}^{-s} \, \delta^{-s}.
\]
\end{lma}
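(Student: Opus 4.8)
The plan is to exploit that $\mathcal{I}(\delta)$ is a finite ``cut set'' of the word tree and to combine this with Hutchinson's formula (\ref{hutch}); the bounds then fall out of a one-line estimate in each direction. First I would record two structural facts about $\mathcal{I}(\delta)$. It is \emph{finite}: since $\text{Lip}(S_{\mathbf{i}_-}) \le L_{\max}^{\lvert\mathbf{i}\rvert-1}$, membership $\mathbf{i}\in\mathcal{I}(\delta)$ forces $\lvert\mathbf{i}\rvert-1 \le \log\delta/\log L_{\max}$, bounding the lengths of its words. It is \emph{complete}: along any infinite word $(i_1,i_2,\dots)$ the number $\text{Lip}(S_{(i_1,\dots,i_k)})$ is strictly decreasing in $k$ (as each $\text{Lip}(S_i)<1$) and tends to $0$, so there is exactly one $k$ with $\text{Lip}(S_{(i_1,\dots,i_k)}) < \delta \le \text{Lip}(S_{(i_1,\dots,i_{k-1})})$; in particular $\mathcal{I}(\delta)\ne\emptyset$ and, since $\text{Lip}(S_\omega)=1\not<\delta$ for $\delta\in(0,1]$, every word in $\mathcal{I}(\delta)$ is nonempty.

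The heart of the argument is the identity
\[
\sum_{\mathbf{i} \in \mathcal{I}(\delta)} \text{Lip}(S_\mathbf{i})^s = 1 ,
\]
which I would prove by strong induction on $M(\delta) := \max\{\lvert\mathbf{i}\rvert : \mathbf{i} \in \mathcal{I}(\delta)\}$. If $M(\delta)=1$, then no word of $\mathcal{I}(\delta)$ needs extending, which (using completeness) forces $\text{Lip}(S_i)<\delta$ for every $i$, so $\mathcal{I}(\delta)=\mathcal{I}$ and the sum equals $\sum_{i\in\mathcal{I}}\text{Lip}(S_i)^s=1$ by (\ref{hutch}). For $M(\delta)>1$, group the words of $\mathcal{I}(\delta)$ by first letter $i$: if $\text{Lip}(S_i)<\delta$ then $(i)$ is the unique such word, contributing $\text{Lip}(S_i)^s$; if $\text{Lip}(S_i)\ge\delta$ then the words with first letter $i$ are exactly $\{(i)\mathbf{j} : \mathbf{j}\in\mathcal{I}(\delta/\text{Lip}(S_i))\}$, a set whose maximal word length is one less than that of this block in $\mathcal{I}(\delta)$, so $M(\delta/\text{Lip}(S_i)) \le M(\delta)-1$ and the induction hypothesis applies. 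Using $\text{Lip}(S_{(i)\mathbf{j}}) = \text{Lip}(S_i)\,\text{Lip}(S_\mathbf{j})$ and factoring out $\text{Lip}(S_i)^s$, this block also contributes exactly $\text{Lip}(S_i)^s$; summing over $i$ and applying (\ref{hutch}) once more yields the identity. I expect this bookkeeping in the induction to be the only delicate part of the proof; everything else is routine.

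Finally, with the identity established, I would assume $s>0$ (the case $s=0$ forces $N=1$, where $\lvert\mathcal{I}(\delta)\rvert=1=\delta^{-0}=L_{\min}^{-0}\delta^{-0}$ and there is nothing to prove). For the lower bound, every $\mathbf{i}\in\mathcal{I}(\delta)$ has $\text{Lip}(S_\mathbf{i})<\delta$, hence $\text{Lip}(S_\mathbf{i})^s<\delta^s$, so $1 = \sum_{\mathbf{i}\in\mathcal{I}(\delta)}\text{Lip}(S_\mathbf{i})^s < \lvert\mathcal{I}(\delta)\rvert\,\delta^s$, giving $\lvert\mathcal{I}(\delta)\rvert > \delta^{-s}$. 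For the upper bound, each $\mathbf{i}\in\mathcal{I}(\delta)$ is nonempty and $\text{Lip}(S_\mathbf{i}) = \text{Lip}(S_{\mathbf{i}_-})\,\text{Lip}(S_{i_{\lvert\mathbf{i}\rvert}}) \ge \delta\,L_{\min}$, hence $\text{Lip}(S_\mathbf{i})^s \ge \delta^s L_{\min}^s$, so $1 \ge \lvert\mathcal{I}(\delta)\rvert\,\delta^s L_{\min}^s$, giving $\lvert\mathcal{I}(\delta)\rvert \le L_{\min}^{-s}\delta^{-s}$.
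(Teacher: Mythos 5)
Your proposal is correct and follows essentially the same route as the paper: both rest on the identity $\sum_{\mathbf{i} \in \mathcal{I}(\delta)} \text{Lip}(S_\mathbf{i})^s = 1$ (which the paper obtains by ``repeated application of Hutchinson's formula'' and you justify in detail by induction on the maximal word length) and then deduce the two bounds from $\delta L_{\min} \leq \text{Lip}(S_\mathbf{i}) < \delta$ exactly as in the paper. The extra bookkeeping (finiteness, completeness, the $s=0$ case) is a careful elaboration of the same argument rather than a different approach.
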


\begin{proof}
Repeated application of Hutchinson's formula (\ref{hutch}) gives
\[
\sum_{\textbf{i} \in \mathcal{I}(\delta)} \text{Lip}(S_\textbf{i})^{s}=1
\]
from which we deduce
\begin{equation} \label{ub}
1 = \sum_{\textbf{i} \in \mathcal{I}(\delta)} \text{Lip}(S_\textbf{i})^{s} \geq \sum_{\textbf{i} \in \mathcal{I}(\delta)} (\delta \, L_{\min})^s = \lvert \mathcal{I}(\delta) \rvert \,  (\delta \, L_{\min})^s 
\end{equation}
and
\begin{equation} \label{lb}
1 = \sum_{\textbf{i} \in \mathcal{I}(\delta)} \text{Lip}(S_\textbf{i})^{s} \leq \sum_{\textbf{i} \in \mathcal{I}(\delta)} \delta^s = \lvert \mathcal{I}(\delta) \rvert \,  \delta^s.
\end{equation}
The desired upper and lower bounds now follow from (\ref{ub}) and (\ref{lb}) respectively.
\end{proof}

\begin{lma} \label{sumsize}
For all $t > s$ we have
\[
\sum_{\textbf{\emph{i}} \in \mathcal{I}^*} \text{\emph{Lip}}(S_\textbf{\emph{i}})^{t} \, = \, b_t \, < \, \infty
\]
for some constant $b_t$ depending only on $t$.
\end{lma}

\begin{proof}
This is a standard fact but we include the simple proof for completeness and to define the constant $b_t$.  Since $t > s$ we have $\sum_{i  \in \mathcal{I}} \text{Lip}(S_i)^{t}<1$.  It follows that
\[
\sum_{\textbf{i} \in \mathcal{I}^*} \text{Lip}(S_\textbf{i})^{t} = \sum_{k=1}^\infty \sum_{\textbf{i} \in \mathcal{I}^k} \text{Lip}(S_\textbf{i})^{t} = \sum_{k=1}^\infty \Bigg( \sum_{i \in \mathcal{I}} \text{Lip}(S_i)^{t} \Bigg)^k < \infty,
\]
which proves the Lemma, setting $b_t = \sum_{k=1}^\infty \Big( \sum_{i \in \mathcal{I}} \text{Lip}(S_i)^{t} \Big)^k$.
\end{proof}

\begin{lma} \label{setsize}
For all $\delta \in (0,1)$, we have
\[
\lvert \{\textbf{\emph{i}} \in \mathcal{I}^*: \delta \leq \text{\emph{Lip}}(S_\textbf{\emph{i}})   \} \rvert \ \leq \ \frac{\log \delta}{\log L_{\max}}  \  \delta^{-s}.
\]
\end{lma}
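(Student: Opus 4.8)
The plan is to stratify the set $\{\textbf{i} \in \mathcal{I}^* : \delta \le \text{Lip}(S_\textbf{i})\}$ according to word length and then, within each length, control the number of words by the quantity $\lvert \mathcal{I}(\delta)\rvert$ already estimated in Lemma \ref{stopsize}. First I would observe that if $\textbf{i} \in \mathcal{I}^*$ satisfies $\text{Lip}(S_\textbf{i}) \ge \delta$, then since each $\text{Lip}(S_i) \le L_{\max} < 1$ we have $\delta \le \text{Lip}(S_\textbf{i}) \le L_{\max}^{\lvert \textbf{i}\rvert}$, which forces $\lvert\textbf{i}\rvert \le \log\delta / \log L_{\max}$. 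Thus all such words have length at most $K := \lfloor \log\delta/\log L_{\max}\rfloor$, and the set in question is contained in $\bigcup_{k=1}^{K}\{\textbf{i}\in\mathcal{I}^k : \text{Lip}(S_\textbf{i})\ge\delta\}$.

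Next, for each fixed $k \le K$ I would bound the number of length-$k$ words with $\text{Lip}(S_\textbf{i}) \ge \delta$. Each such word $\textbf{i}$ has a (unique) prefix lying in the $\delta$-stopping $\mathcal{I}(\delta)$ — indeed, reading $\textbf{i}$ letter by letter, the Lipschitz constant of the prefixes decreases from $1$ and, since $\text{Lip}(S_\textbf{i})\ge\delta$, the first prefix whose Lipschitz constant drops below $\delta$ is a proper extension of $\textbf{i}$ (or $\textbf{i}$ itself is a prefix of some word in $\mathcal{I}(\delta)$). Either way, distinct length-$k$ words with Lipschitz constant $\ge\delta$ are prefixes of distinct words in $\mathcal{I}(\delta)$ (a word of length $k$ has at most one extension in $\mathcal{I}(\delta)$ in the sense that it determines which cylinder it sits in, and two different length-$k$ words give disjoint cylinders), so the number of length-$k$ words with $\text{Lip}(S_\textbf{i})\ge\delta$ is at most $\lvert\mathcal{I}(\delta)\rvert \le L_{\min}^{-s}\delta^{-s}$ by Lemma \ref{stopsize}. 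Actually a cleaner route avoids $L_{\min}$: directly, $1 = \sum_{\textbf{j}\in\mathcal{I}(\delta)}\text{Lip}(S_\textbf{j})^s \ge \sum_{\textbf{i}\in\mathcal{I}^k,\,\text{Lip}(S_\textbf{i})\ge\delta}\text{Lip}(S_\textbf{i})^s \ge \delta^s \cdot \lvert\{\textbf{i}\in\mathcal{I}^k : \text{Lip}(S_\textbf{i})\ge\delta\}\rvert$, using that the cylinders indexed by such length-$k$ words are disjoint and each contains a sub-collection of $\mathcal{I}(\delta)$ whose $s$-th powers sum to $\text{Lip}(S_\textbf{i})^s$ — hence this count is at most $\delta^{-s}$.

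Finally I would sum over $k$: the total is at most $K \cdot \delta^{-s} \le (\log\delta/\log L_{\max})\,\delta^{-s}$, which is exactly the claimed bound. The main obstacle — really the only point needing care — is the combinatorial step in the second paragraph: making precise that, for a fixed length $k$, the words with Lipschitz constant $\ge\delta$ index pairwise disjoint cylinders each of which refines into a piece of the stopping set $\mathcal{I}(\delta)$, so that the Hutchinson sum over $\mathcal{I}(\delta)$ dominates $\delta^s$ times their count. Once that is set up, the length bound and the summation are immediate.
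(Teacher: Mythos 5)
Your proposal is correct and takes essentially the same route as the paper: both bound the word length by $\log\delta/\log L_{\max}$ and then bound the number of words of each fixed length with $\text{Lip}(S_{\textbf{i}})\geq\delta$ by $\delta^{-s}$ via the fact that the Hutchinson sum at each level equals $1$. The only cosmetic difference is that the paper obtains the per-level count directly from $\sum_{\textbf{i}\in\mathcal{I}^k}\text{Lip}(S_{\textbf{i}})^s=1$ rather than refining through the stopping set $\mathcal{I}(\delta)$, and you are right to discard your first variant (counting via $\lvert\mathcal{I}(\delta)\rvert\leq L_{\min}^{-s}\delta^{-s}$), since it would only yield the weaker bound carrying the extra factor $L_{\min}^{-s}$.
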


\begin{proof}
Let $\delta \in (0,1)$ and suppose $\textbf{i} \in \mathcal{I}^*$ is such that $\delta \leq \text{Lip}(S_\textbf{i})$.  It follows that $\delta \leq L_{\max}^{\lvert \textbf{i} \rvert}$ and hence
\begin{equation} \label{length}
\lvert \textbf{i} \rvert \leq \frac{\log \delta}{\log L_{\max}}.
\end{equation}
Repeatedly applying Hutchison's formula (\ref{hutch}) gives
\begin{eqnarray*}
 \tfrac{\log \delta}{\log L_{\max}}   \ \geq \  \sum_{\substack{l \in \mathbb{N}:\\ \\
l \leq  \frac{\log \delta}{\log L_{\max}}}}1 \ &\geq& \  \sum_{\substack{l \in \mathbb{N}:\\ \\
l \leq  \frac{\log \delta}{\log L_{\max}}}} \sum_{\textbf{i} \in \mathcal{I}^l} \text{Lip}(S_\textbf{i})^s \\ \\
&\geq& \ \sum_{\substack{l \in \mathbb{N}:\\ \\
l \leq  \frac{\log \delta}{\log L_{\max}}}} \  \sum_{\substack{\textbf{i} \in \mathcal{I}^l: \\ \\
\delta \leq \text{Lip}(S_\textbf{i})}} \text{Lip}(S_\textbf{i})^s 
\\ \\
&\geq& \ \sum_{\substack{l \in \mathbb{N}:\\ \\
l \leq  \frac{\log \delta}{\log L_{\max}}}} \  \sum_{\substack{\textbf{i} \in \mathcal{I}^l: \\ \\
\delta \leq \text{Lip}(S_\textbf{i})}} \delta^s \\ \\
&=& \lvert \{\textbf{i} \in \mathcal{I}^*: \delta \leq \text{Lip}(S_\textbf{i})   \} \rvert \, \delta^s
\end{eqnarray*}
by (\ref{length}), which proves the result.
\end{proof}

\subsection{Proof of Lemma \ref{keylem}}  \label{keylemproof}

\emph{Proof of (1):} This follows immediately from the definition of $p_{t,\delta}(C)$ and the fact that the set
\[
\big\{p \in [0,1] : N_{\delta^p}(C) \geq \delta^{-pt} \big\}
\]
is never empty as it always contains the point 0.
\\ \\
\emph{Proof of (2):}  It is clear that $p_t(C)$ is decreasing in $t$.  If $t< \underline{\dim}_\text{B} C$, then there exists $\delta_0 \in (0,1]$ such that for all $\delta<\delta_0$ we have
\[
N_{\delta}(C) \geq \delta^{-t}
\]
which implies that if $\delta<\delta_0$, then  $p_{t,\delta}(C)=1$, which completes the proof.  The proof that if $t> \overline{\dim}_\text{B} C$, then $p_{t}(C)=0$ is similar and omitted.
\\ \\
\emph{Proof of (3):} Let $t >0$ and $\delta \in (0,1]$ and without loss of generality assume that $p_{t,\delta}(C)>0$.  By the definition of $p_{t,\delta}(C)$ we may choose arbitrarily small $\varepsilon \in \big(0,p_{t,\delta}(C)\big)$ such that
\begin{equation} \label{choosingepsilon}
N_{\delta^{p_{t,\delta}(C)-\varepsilon}}(C) \geq \delta^{-(p_{t,\delta}(C)-\varepsilon)t}.
\end{equation}
It follows from this that
\[
N_{\delta^{p_{t,\delta}(C)}}(C) \geq N_{\delta^{p_{t,\delta}(C)-\varepsilon}}(C) \geq \delta^{-(p_{t,\delta}(C)-\varepsilon)t} = \delta^{-p_{t,\delta}(C)t} \, \delta^{\varepsilon t}
\]
and letting $\varepsilon \to 0$ through values satisfying (\ref{choosingepsilon}) proves the result.
\\ \\
\emph{Proof of (4):} Let $t>\underline{\dim}_\text{B}C$ and $\varepsilon \in (0, t- \underline{\dim}_\text{B}C)$.  By the definition of lower box dimension, there exists arbitrarily small $\delta>0$ such that
\[
N_\delta(C) \leq \delta^{-(\underline{\dim}_{\text{B}}C+\varepsilon)}.
\]
Fix such a $\delta \in (0,1)$ and since $N_\delta(C)$ increases as $\delta$ decreases,
\[
\delta^{-p_{t,\delta}(C)t} \leq N_{\delta^{p_{t,\delta}(C)}}(C) \leq N_\delta(C) \leq \delta^{-(\underline{\dim}_{\text{B}}C+\varepsilon)}.
\]
Taking logs and dividing by $-t\log \delta$ yields
\[
p_{t,\delta}(C) \leq \frac{\underline{\dim}_{\text{B}}C+\varepsilon}{t}
\]
and since we can find arbitrarily small $\delta$ satisfying the above inequality, the desired upper bound follows.
\\ \\
\emph{Proof of (5):} Let $\underline{\dim}_\text{B}C< s< t< \overline{\dim}_\text{B}C$.  It follows from Lemma \ref{keylem} (4) above that $ p_s(C)<1$ and so we may choose $\varepsilon \in (0,1-p_s(C)]$.  It follows that there exists $\delta \in (0, \varepsilon)$ such that $p_{s,\delta}(C) < p_s(C)+\varepsilon \leq 1$.  This implies that
\[
N_{\delta^{p_s(C)+\varepsilon}}(C)< \delta^{-(p_s(C)+\varepsilon)s}.
\]
Using this, Lemma \ref{keylem} (3), and the fact that $N_\delta(C)$ increases as $\delta$ decreases, we have
\[
\delta^{-p_{t,\delta}(C)t} \leq N_{\delta^{p_{t,\delta}(C)}}(C) \leq  N_{\delta^{p_s(C)+\varepsilon}}(C)< \delta^{-(p_s(C)+\varepsilon)s}.
\]
Taking logs and dividing by $-t\log \delta$ yields
\[
p_{t,\delta}(C) \ \leq \  \frac{s}{t} \, (p_s(C)+\varepsilon)
\]
and since we can find arbitrarily small $\delta$ satisfying the above inequality, the desired upper bound follows.
\\ \\
\emph{Proof of (6):} Let $t \in (\underline{\dim}_\text{B}C, \, \overline{\dim}_\text{B}C)$ and $\varepsilon \in (0, t- \underline{\dim}_\text{B}C)$.  Following the argument used in the proof of Lemma \ref{keylem} (4), we can find arbitrarily small $\delta \in (0,1)$ such that
\begin{equation} \label{Nestimate}
N_\delta(C) \leq \delta^{-(\underline{\dim}_\text{B}C+\varepsilon)}  \qquad \text{and} \qquad p_{t,\delta}(C) \leq \frac{\underline{\dim}_{\text{B}}C+\varepsilon}{t} \leq 1.
\end{equation}
Fix such a $\delta$.  Since $X$ is Ahlfors regular, it follows that there exists constants $K\geq 1$ and $\rho \in (0,1]$ such that any ball of radius $\delta<\rho$ can be covered by fewer than
\begin{equation} \label{ahl}
K \, \Big(\frac{\delta}{\delta_0} \Big)^{\overline{\dim}_\text{B}X}
\end{equation}
 balls of radius $\delta_0\leq\delta<\rho$.  Let
\begin{equation} \label{mbound}
m=\max \Bigg\{1, \  \frac{\log K}{(\overline{\dim}_\text{B}X-t) \log \delta}+\frac{\overline{\dim}_\text{B}X-\underline{\dim}_\text{B}C-\varepsilon}{\overline{\dim}_\text{B}X-t}  \Bigg\}.
\end{equation}
Let $\delta' = \delta^{q} \in (\delta^m, \delta)$ for some $q \in (1,m)$.  A simple calculation combining (\ref{Nestimate}, \ref{ahl}, \ref{mbound}) yields that
\[
N_{\delta'}(C) \ = \ N_{\delta^q}(C) \ \leq \  K  \Big(\frac{\delta}{\delta^q} \Big)^{\overline{\dim}_\text{B}X} N_{\delta}(C) \ \leq \ K  \Big(\frac{\delta}{\delta^q} \Big)^{\overline{\dim}_\text{B}X} \delta^{-(\underline{\dim}_{\text{B}}C+\varepsilon)} \ < \ \delta^{-qt} = (\delta')^{-t}.
\]
Note that if $m=1$, then this is vacuously true, but indeed $m>1$ for sufficiently small $\varepsilon$ and $\delta$.  It follows that
\[
N_{\delta'}(C) < (\delta')^{-t}
\]
for all $\delta' \in  (\delta^m, \delta) \cup  [\delta, \delta^{p_{t,\delta}(C)}) = (\delta^m, \delta^{p_{t,\delta}(C)})$.  This, combined with the fact that
\[
N_{(\delta^m)^{p_{t,\delta}(C)/m}}(C) \ =  \ N_{\delta^{p_{t,\delta}(C)}}(C) \ \geq \  \delta^{-p_{t,\delta}(C) t}  \ = \  (\delta^{m})^{-(p_{t,\delta}(C)/m) t}
\]
by the definition of $p_{t,\delta}(C)$, yields that $p_{t,\delta^m}(C)  =   p_{t,\delta}(C)/m$.  Hence
\[
p_{t,\delta^m}(C) \ = \  \frac{p_{t,\delta}(C)}{m} \ \leq  \  \frac{\underline{\dim}_{\text{B}}C+\varepsilon}{t} \ \bigg(\frac{\log K}{(\overline{\dim}_\text{B}X-t) \log \delta}+\frac{\overline{\dim}_\text{B}X-\underline{\dim}_\text{B}C-\varepsilon}{\overline{\dim}_\text{B}X-t} \bigg)^{-1}
\]
by (\ref{Nestimate}, \ref{mbound}).  Letting $\delta \to 0$ through values satisfying (\ref{Nestimate}) yields
\[
p_{t}(C) \ \leq \  \frac{\underline{\dim}_{\text{B}}C+\varepsilon}{t} \ \frac{\overline{\dim}_\text{B}X-t}{\overline{\dim}_\text{B}X-\underline{\dim}_\text{B}C-\varepsilon}
\]
and finally letting $\varepsilon \to 0$ we have
\[
p_{t}(C) \ \leq \  \frac{\underline{\dim}_{\text{B}}C}{t} \ \frac{\overline{\dim}_\text{B}X-t}{\overline{\dim}_\text{B}X-\underline{\dim}_\text{B}C}
\]
as required. \hfill \qed

\subsection{Proof of Theorem \ref{main}}  \label{mainproof}

By monotonicity of upper box dimension, we have $\max \{ \overline{\dim}_\text{B} F_\emptyset, \ \overline{\dim}_\text{B} C\} \ \leq \ \overline{\dim}_\text{B} F_C$.  We will now prove the other inequality.  Since upper box dimension is finitely stable it suffices to show that
\[
\overline{\dim}_\text{B} \mathcal{O} \leq \max\{s, \ \overline{\dim}_\text{B} C\}.
\]
Let $t>\max\{s, \ \overline{\dim}_\text{B} C\}$.  It follows from the definition of upper box dimension that there exists a constant $c_t>0$ such that
\begin{equation} \label{boundy}
N_\delta(C) \leq c_t \, \delta^{-t}
\end{equation}
for all $\delta \in (0,1]$.  Also note that since $X$ is compact, the number of balls of radius 1 required to cover $X$ is a finite constant $N_1(X)$.  Let $\delta \in (0,1]$.  We have
\begin{eqnarray*}
N_\delta(\mathcal{O}) = N_\delta \Bigg(  C \cup \bigcup_{\textbf{i} \in \mathcal{I}^*} S_{\textbf{i}}(C) \Bigg)  &\leq& \sum_{\substack{\textbf{i} \in \mathcal{I}^*: \\ \\
\delta \leq \text{Lip}(S_\textbf{i})}} N_\delta \big( S_{\textbf{i}}(C) \big) \ +  \   N_\delta  \Bigg( \  \bigcup_{\substack{\textbf{i} \in \mathcal{I}^*: \\ \\
\delta> \text{Lip}(S_\textbf{i})}} S_{\textbf{i}}(C) \ \Bigg) \ + \ N_\delta(C)\\ \\
&\leq& \sum_{\substack{\textbf{i} \in \mathcal{I}^*: \\ \\
\delta \leq \text{Lip}(S_\textbf{i})}} N_{\delta / \text{Lip}(S_\textbf{i})} (C) \ +  \   N_\delta  \Bigg( \  \bigcup_{\textbf{i} \in \mathcal{I}(\delta)} S_{\textbf{i}}(X) \ \Bigg) \ + \ N_\delta(C)\\ \\
&\leq& \sum_{\substack{\textbf{i} \in \mathcal{I}^*: \\ \\
\delta \leq \text{Lip}(S_\textbf{i})}} c_t \, \big(\delta / \text{Lip}(S_\textbf{i})\big)^{-t} \ +  \   \sum_{\textbf{i} \in \mathcal{I}(\delta)} N_{\delta / \text{Lip}(S_\textbf{i})}(X) \ + \ c_t \, \delta^{-t} \qquad \text{by (\ref{boundy})}\\ \\
&\leq& c_t \, \delta^{-t}  \sum_{\substack{\textbf{i} \in \mathcal{I}^*: \\ \\
\delta \leq \text{Lip}(S_\textbf{i})}} \text{Lip}(S_\textbf{i})^{t} \ +  \   N_1(X) \, \lvert  \mathcal{I}(\delta) \rvert \ + \ c_t \, \delta^{-t}  \\ \\
&\leq& c_t \, \delta^{-t}  \, \sum_{\textbf{i} \in \mathcal{I}^*} \text{Lip}(S_\textbf{i})^{t} \ +  \   N_1(X) \, L_{\min}^{-s} \, \delta^{-s} \ + \ c_t \, \delta^{-t} \qquad \quad  \text{by Lemma \ref{stopsize}}\\ \\
&\leq& \big(c_t \, b_t \, +  \,   N_1(X) \, L_{\min}^{-s} \, + \, c_t \big) \, \delta^{-t}
\end{eqnarray*}
by Lemma \ref{sumsize}, from which it follows that $\overline{\dim}_\text{B} F_C =\overline{\dim}_\text{B} \mathcal{O}  \leq t$ and since $t$ can be chosen arbitrarily close to $\max\{s, \ \overline{\dim}_\text{B} C\}$, we have proved the Theorem. \hfill \qed

\subsection{Proof of Theorem \ref{lowerbound}}  \label{lowerproof}

Suppose $(X,d)$ is Ahlfors regular and that $\mathbb{I}$, together with $C$, satisfies the COSC.  We begin with two simple technical lemmas.
\begin{lma} \label{openlem}
Let $a,b >0$, let $\{U_i\}$ be a collection of disjoint open subsets of $X$ and suppose that each $U_i$ contains a ball of radius $ar$ and is contained in a ball of radius $br$.  Then any ball of radius $r$ intersects no more than
\[
\lambda^2 \, \Big(\frac{1+2b}{a}\Big)^{\dim_\text{\emph{H}} X}
\]
of the closures $\{\overline{U}_i\}$.
\end{lma}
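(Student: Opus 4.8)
The plan is to run the standard volume‑packing argument afforded by Ahlfors regularity. Fix a ball $B(x,r)$ and let $\mathcal{A}$ denote the set of indices $i$ for which $\overline{U}_i \cap B(x,r) \neq \emptyset$; the goal is to bound $\lvert\mathcal{A}\rvert$. Write $s = \dim_\H X$ and let $\lambda$ be the Ahlfors regularity constant.

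The first step is to trap all the relevant closures inside one controlled ball. Since each $U_i$ is contained in a ball of radius $br$, its closure $\overline{U}_i$ has diameter at most $2br$. Hence, for $i \in \mathcal{A}$, choosing a point $z \in \overline{U}_i \cap B(x,r)$ and applying the triangle inequality to an arbitrary $w \in \overline{U}_i$ gives $d(w,x) \le d(w,z) + d(z,x) \le 2br + r$, so that $\overline{U}_i \subseteq \overline{B\big(x,(1+2b)r\big)}$. The second step exhibits a disjoint family of balls: by hypothesis each $U_i$ contains a ball $B(p_i,ar)$, and since the $U_i$ are pairwise disjoint so are these balls; moreover for $i \in \mathcal{A}$ we have $B(p_i,ar) \subseteq U_i \subseteq \overline{B\big(x,(1+2b)r\big)}$.

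The third step is the count. Using the two‑sided Ahlfors bound for the critical Hausdorff measure $\mathcal{H}^s$ together with disjointness,
\[
\frac{\lvert\mathcal{A}\rvert}{\lambda}\,(ar)^s \ \le\ \sum_{i\in\mathcal{A}} \mathcal{H}^s\big(B(p_i,ar)\big) \ =\ \mathcal{H}^s\Big(\bigcup_{i\in\mathcal{A}} B(p_i,ar)\Big) \ \le\ \mathcal{H}^s\big(\overline{B(x,(1+2b)r)}\big) \ \le\ \lambda\,\big((1+2b)r\big)^s,
\]
and rearranging yields $\lvert\mathcal{A}\rvert \le \lambda^2\big((1+2b)/a\big)^s$, which is exactly the assertion.

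I expect no serious obstacle; the only technical care needed is that the Ahlfors estimates are stated for open balls of radius at most $\mathrm{diam}(X)$. To apply the upper bound to the closed ball $\overline{B(x,(1+2b)r)}$ I would instead work with the slightly larger open ball $B\big(x,(1+2b+\eta)r\big)$ and let $\eta \to 0$ at the end; similarly one may assume $ar \le \mathrm{diam}(X)$ and $(1+2b)r$ comparable to $\mathrm{diam}(X)$, the remaining degenerate cases being handled by the trivial bound $\mathcal{H}^s(X) \le \lambda\,\mathrm{diam}(X)^s < \infty$. These adjustments do not affect the final constant.
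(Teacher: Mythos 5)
Your argument is correct and is essentially the paper's own proof: both trap each intersecting $\overline{U}_i$ inside $B\big(x,(1+2b)r\big)$ via the triangle inequality, sum the $\mathcal{H}^{\dim_\text{H}X}$-measures of the disjoint inner balls of radius $ar$, and apply the two-sided Ahlfors bound to get $N \leq \lambda^2\big((1+2b)/a\big)^{\dim_\text{H}X}$. The extra care you take over closed versus open balls and the radius restriction is a harmless refinement that the paper elides.
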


This is a trivial modification of a standard result in Euclidean space, see \cite[Lemma 9.2]{falconer}, but for completeness we include the simple proof.
\begin{proof}
For each $i$ let $B_i$ denote the ball of radius $ar$ contained in $U_i$ and note that these balls are pairwise disjoint.  Fix $x \in X$ and suppose $B(x,r) \cap \overline{U}_i \neq \emptyset$ for some $i$.  It follows that $\overline{U}_i  \subseteq B\big(x, (1+2b)r\big)$.  Suppose the number of $i$ such that $B(x,r) \cap \overline{U}_i \neq \emptyset$ is equal to $N$.  Then
\[
N \, \tfrac{1}{\lambda}\,  (ar)^{\dim_\text{\H} X}  \ \leq \sum_{i: B(x,r) \cap \overline{U}_i \neq \emptyset} \mathcal{H}^{\dim_\text{\H} X} \big( B_i\big) \  \leq \  \mathcal{H}^{\dim_\text{\H} X}\Big( B\big(x, (1+2b)r\big) \Big)\  \leq\ \lambda \, \big((1+2b)r\big)^{\dim_\text{\H} X} 
\]
and solving for $N$ proves the lemma.
\end{proof}

\begin{lma} \label{opendisjoint}
Let $\delta \in (0,1]$ and $\textbf{\emph{i}}, \textbf{\emph{j}} \in \mathcal{I}(\delta)$ with $\textbf{\emph{i}} \neq \textbf{\emph{j}}$.  Writing $U$ for the open set used in the COSC, we have
\[
S_{\textbf{\emph{i}}}(U) \cap S_{\textbf{\emph{j}}}(U) = \emptyset.
\]
\end{lma}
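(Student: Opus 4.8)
The plan is to reduce the statement to the disjointness of the first-level cylinders $S_1(U), \dots, S_N(U)$, which is exactly what the SOSC part of the COSC provides, together with the nesting property $\bigcup_{i} S_i(U) \subseteq U$.

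The key preliminary observation is that if $\textbf{i}, \textbf{j} \in \mathcal{I}(\delta)$ with $\textbf{i} \neq \textbf{j}$, then neither word is an initial segment of the other. Indeed, if $\textbf{i}$ were a proper prefix of $\textbf{j}$ it would also be a prefix of $\textbf{j}_-$, giving $\text{Lip}(S_{\textbf{i}}) \geq \text{Lip}(S_{\textbf{j}_-}) \geq \delta$, which contradicts $\text{Lip}(S_{\textbf{i}}) < \delta$. Consequently $\textbf{i}$ and $\textbf{j}$ have a (possibly empty) longest common prefix $\textbf{k}$ which is a proper prefix of both, so we may write $\textbf{i} = \textbf{k}\, i'\, \textbf{i}''$ and $\textbf{j} = \textbf{k}\, j'\, \textbf{j}''$ with $i', j' \in \mathcal{I}$, $i' \neq j'$, and $\textbf{i}'', \textbf{j}''$ possibly empty words.

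Next I would use the nesting property: since $S_m(U) \subseteq U$ for each $m \in \mathcal{I}$, composing gives $S_{\textbf{i}''}(U) \subseteq U$ and $S_{\textbf{j}''}(U) \subseteq U$, hence
\[
S_{\textbf{i}}(U) \ \subseteq \ S_{\textbf{k}}\big(S_{i'}(U)\big) \qquad \text{and} \qquad S_{\textbf{j}}(U) \ \subseteq \ S_{\textbf{k}}\big(S_{j'}(U)\big).
\]
Finally, the SOSC gives $S_{i'}(U) \cap S_{j'}(U) = \emptyset$, and since $S_{\textbf{k}}$ is injective (a composition of similarities, or the identity when $\textbf{k}$ is the empty word) it maps disjoint sets to disjoint sets; therefore $S_{\textbf{i}}(U) \cap S_{\textbf{j}}(U) = \emptyset$. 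There is no genuine obstacle here — the only point requiring a moment's thought is the prefix observation, and everything else is routine bookkeeping with the open set condition.
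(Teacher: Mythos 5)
Your proof is correct and follows exactly the route the paper takes: the paper's own proof is a one-line remark that the lemma follows from the OSC together with the fact that neither $\textbf{i}$ nor $\textbf{j}$ is a subword of the other, and you have simply supplied the routine details (the prefix observation via the stopping-time condition, the nesting $S_m(U)\subseteq U$, and injectivity of $S_{\textbf{k}}$). No issues.
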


\begin{proof}
This is a simple consequence of the COSC (in fact the OSC is enough) and the fact that neither $\textbf{i}$ nor $\textbf{j}$ is a subword of the other.
\end{proof}

We now turn to the proof of Theorem \ref{lowerbound}.

\begin{proof} If $0 \leq t \leq \max\{s, \underline{\dim}_\text{B}C\}$, then the result is clearly true (and not an improvement on Corollary \ref{trivial bounds}) so assume that $t > \max\{s, \underline{\dim}_\text{B}C\}$ and let $\varepsilon \in (0,1]$.  Choose $\delta_0 \in (0,1]$ such that for all $\delta \in (0, \delta_0]$ we have $p_{t,\delta}(C) \geq p_t(C) - \varepsilon$.  Fix $\delta \in (0,\delta_0]$ and finally, to simplify notation, write $p_{t,\delta} = p_{t,\delta}(C)$ and $p_t = p_t(C)$.  We will now consider two cases.
\\ \\
\emph{Case 1: Assume that $\delta^{1-p_{t,\delta}} \, L_{\min}^{-1}  \leq 1$.}
\\ \\
Let $U$ be the open set used for the COSC and choose $a,b>0$ such that $U$ contains a ball of radius $a$ and is contained in a ball of radius $b$.  It follows that for each $\textbf{i} \in \mathcal{I}(\delta^{1-p_{t,\delta} }\, L_{\min}^{-1})$ the image $S_{\textbf{i}}(U)$ is an open set which contains a ball of radius $a \, \delta^{1-p_{t,\delta}}$ and is contained in a ball of radius $b \, L_{\min}^{-1} \, \delta^{1-p_{t,\delta}}$.  Furthermore, it follows from Lemma \ref{opendisjoint} that the sets
\[
\big\{S_{\textbf{i}}(U) : \textbf{i} \in \mathcal{I}(\delta^{1-p_{t,\delta} }\, L_{\min}^{-1}) \big\}
\]
are pairwise disjoint.  Since, for each $\textbf{i} \in \mathcal{I}(\delta^{1-p_{t,\delta} }\, L_{\min}^{-1})$, we have $S_{\textbf{i}}(C) \subseteq \overline{S_{\textbf{i}}(U)}$, it follows from Lemma \ref{openlem} that any ball of radius $\delta^{1-p_{t,\delta}}$, and hence any set of diameter $\delta$, can intersect no more than
\[
\kappa:=\lambda^2 \, \Big(\frac{1+2bL_{\min}^{-1}}{a}\Big)^{\dim_\H X}
\]
of the sets
\[
\big\{S_{\textbf{i}}(C) : \textbf{i} \in \mathcal{I}(\delta^{1-p_{t,\delta} }\, L_{\min}^{-1}) \big\}.
\]
Whence
\begin{eqnarray*}
N_\delta(\mathcal{O}) = N_\delta \Bigg(C \cup   \bigcup_{\textbf{i} \in \mathcal{I}^*} S_{\textbf{i}}(C) \Bigg)  &\geq& \kappa^{-1}\sum_{\textbf{i} \in \mathcal{I}(\delta^{1-p_{t,\delta} }\, L_{\min}^{-1})} N_\delta \big(  S_{\textbf{i}}(C) \big) \\ \\
&=& \kappa^{-1}\sum_{\textbf{i} \in \mathcal{I}(\delta^{1-p_{t,\delta} }\, L_{\min}^{-1})} N_{\delta/\text{Lip}(S_{\textbf{i}})} (C) \\ \\
&\geq& \kappa^{-1}\sum_{\textbf{i} \in \mathcal{I}(\delta^{1-p_{t,\delta} }\, L_{\min}^{-1})} N_{\delta^{p_{t,\delta} }} (C) \\ \\
&\geq& \kappa^{-1} \, \delta^{-tp_{t,\delta} } \ \lvert \mathcal{I}(\delta^{1-p_{t,\delta} }\, L_{\min}^{-1}) \rvert \qquad \qquad  \text{by Lemma \ref{keylem} (3)}\\ \\
&\geq&\kappa^{-1} \, \delta^{-tp_{t,\delta} } \ (\delta^{1-p_{t,\delta} }\, L_{\min}^{-1})^{-s}  \qquad \qquad  \text{by Lemma \ref{stopsize}}\\ \\
&=&\kappa^{-1} \, L_{\min}^{s} \,  \delta^{-\big(p_{t,\delta} t +(1-p_{t,\delta} )s\big)}\\ \\
&\geq& \kappa^{-1} \, L_{\min}^{s} \, \delta^{-\big((p_{t}-\varepsilon) t +(1-(p_{t}-\varepsilon) )s\big)}
\end{eqnarray*}
from which it follows that $\underline{\dim}_\text{B} F_C = \underline{\dim}_\text{B} \mathcal{O} \geq (p_{t}-\varepsilon) t +(1-(p_{t}-\varepsilon) )s$.
\\ \\
\emph{Case 2: Assume that $\delta^{1-p_{t,\delta}} \, L_{\min}^{-1}  > 1$.}
\\ \\
Note that our assumption implies that $1 \geq \delta^{-(1-p_{t,\delta}) s} \, L_{\min}^s$.  It follows that
\[
N_\delta(\mathcal{O}) \ \geq \   N_{\delta^{p_{t,\delta}}} (C) \ \geq \ \delta^{-p_{t,\delta}t}\  \geq \  \delta^{-(1-p_{t,\delta}) s} \, L_{\min}^s \,  \delta^{-p_{t,\delta}t} \ \geq \ L_{\min}^{s} \, \delta^{-\big((p_{t}-\varepsilon) t +(1-(p_{t}-\varepsilon) )s\big)} 
\]
from which it follows that $\underline{\dim}_\text{B} \mathcal{O} \geq (p_{t}-\varepsilon) t +(1-(p_{t}-\varepsilon) )s$.
\\ \\
Combining Cases 1--2 and letting $\varepsilon$ tend to zero proves the Theorem.

\end{proof}

\subsection{Proof of Theorem \ref{upperbound}}   \label{upperproof}

We begin with a simple technical Lemma.

\begin{lma} \label{findd}
Let $t \geq 0$.  If $p_t(C) <1$, then for all $\varepsilon \in \big(0, 1-p_t(C) \big)$, there exists $\delta \in (0,\varepsilon)$ such that
\[
p_t(C)-\varepsilon<p_{t,\delta}(C)<p_t(C)+\varepsilon
\]
and, for all $\delta_0 \in [\delta, \delta^{p_t(C)}]$, we have
\[
N_{\delta_0}(C) \leq \delta_0^{-t}.
\]
\end{lma}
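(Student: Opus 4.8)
The plan is to read off both assertions directly from the definition $p_t(C) = \liminf_{\delta \to 0} p_{t,\delta}(C)$. First I would produce the scale $\delta$. The lower bound $p_t(C) - \varepsilon < p_{t,\delta}(C)$ holds automatically for \emph{every} sufficiently small $\delta$, since otherwise a sequence $\delta \to 0$ with $p_{t,\delta}(C) \le p_t(C) - \varepsilon$ would pull the $\liminf$ below $p_t(C)$. For the upper bound I would use that $\eta \mapsto \inf_{0 < \delta < \eta} p_{t,\delta}(C)$ increases to $p_t(C)$ as $\eta \to 0^+$, so $\inf_{0 < \delta < \eta} p_{t,\delta}(C) \le p_t(C)$ for every $\eta$; hence for each $\eta$ there are $\delta \in (0,\eta)$ with $p_{t,\delta}(C)$ arbitrarily close to $p_t(C)$, and choosing one that is also small enough for the lower bound yields $p_t(C) - \varepsilon < p_{t,\delta}(C) < p_t(C) + \varepsilon$. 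For the second half one actually wants slightly more, namely a $\delta$ with $p_{t,\delta}(C) \le p_t(C)$ (ideally strictly), and I would arrange this by choosing $\delta$ along which $p_{t,\delta}(C)$ decreases to the $\liminf$.

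Granting such a $\delta$, the covering estimate is then immediate from the fact recorded in the definition (\ref{CREdef1}) that $p_{t,\delta}(C)$ is the \emph{supremum} of $\{p \in [0,1] : N_{\delta^p}(C) \ge \delta^{-pt}\}$. Given $\delta_0 \in [\delta, \delta^{p_t(C)}]$ --- a genuine interval, since $0 < \delta < 1$ and $0 \le p_t(C) \le 1$ force $\delta \le \delta^{p_t(C)} \le 1$ --- write $\delta_0 = \delta^q$ with $q \in [p_t(C),1]$. For $q > p_{t,\delta}(C)$ the value $q$ lies outside that set, so $N_{\delta^q}(C) < \delta^{-qt}$, that is, $N_{\delta_0}(C) < \delta_0^{-t}$; and since $\delta$ was chosen with $p_{t,\delta}(C) \le p_t(C) \le q$ this covers every $q \in [p_t(C),1]$, apart from the endpoint case $q = p_{t,\delta}(C) = p_t(C)$, where Lemma~\ref{keylem}\,(3) forces $N_{\delta^{p_t(C)}}(C) \ge \delta^{-p_t(C) t}$ in the wrong direction --- which is precisely why one wants the strict inequality $p_{t,\delta}(C) < p_t(C)$ (or else absorbs this single scale into a constant).

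I expect the one genuine obstacle to be exactly this extraction: the $\liminf$ only guarantees scales $\delta$ with $p_{t,\delta}(C)$ arbitrarily close to $p_t(C)$, a priori always from above, whereas the conclusion on the \emph{closed} interval $[\delta, \delta^{p_t(C)}]$ needs $p_{t,\delta}(C)$ to reach $p_t(C)$ from below. Everything else is routine unwinding of the definitions of $N_\delta$, $p_{t,\delta}(C)$ and $p_t(C)$, together with Lemma~\ref{keylem}\,(3).
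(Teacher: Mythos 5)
Your handling of the first assertion is exactly the paper's: $p_{t,\delta}(C)>p_t(C)-\varepsilon$ for all sufficiently small $\delta$, and $p_{t,\delta}(C)<p_t(C)+\varepsilon$ for arbitrarily small $\delta$, both read off from the $\liminf$. The obstacle you flag in the second half is genuine, but your proposed repair does not close it: a $\liminf$ only supplies scales $\delta\to 0$ with $p_{t,\delta}(C)\to p_t(C)$, and nothing prevents this convergence from being strictly from above, so a $\delta$ with $p_{t,\delta}(C)\leq p_t(C)$ need not exist. Worse, whenever $p_{t,\delta}(C)>p_t(C)$, Lemma \ref{keylem} (3) gives $N_{\delta^{p_{t,\delta}(C)}}(C)\geq \delta^{-p_{t,\delta}(C)t}$ at the scale $\delta^{p_{t,\delta}(C)}$, which lies inside $[\delta,\delta^{p_t(C)}]$; so the conclusion on the full closed interval $[\delta,\delta^{p_t(C)}]$ can actually fail, and the statement is only safe with right endpoint $\delta^{p_t(C)+\varepsilon}$.

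That weaker interval is precisely what the paper's own proof establishes (its final display reads ``for all $\delta_0\in[\delta,\delta^{p_t(C)+\varepsilon}]$''): from $p_{t,\delta}(C)<p_t(C)+\varepsilon$ and the supremum in (\ref{CREdef1}), every $q\in[p_t(C)+\varepsilon,1]$ satisfies $q>p_{t,\delta}(C)$, hence $N_{\delta^q}(C)<\delta^{-qt}$, with no endpoint case to absorb. This weaker form is also all that Theorem \ref{upperbound} uses: there the bound is applied to $\delta_0=\delta/\text{Lip}(S_{\mathbf{i}})$ with $\text{Lip}(S_{\mathbf{i}})\geq\delta^{1-p_{t,\delta}(C)-\varepsilon}$, i.e.\ to $\delta_0\leq\delta^{p_{t,\delta}(C)+\varepsilon}<\delta^{p_t(C)}$. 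So your argument proves exactly what the paper's does, and you correctly located the one real obstruction; but the extra step you propose cannot be carried out, and the correct resolution is to shrink the interval in the conclusion rather than to strengthen the choice of $\delta$.
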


\begin{proof}
Since $p_t(C) <1$, it follows that for all $\varepsilon \in \big(0, 1-p_t(C) \big)$, there exists $\delta \in (0,\varepsilon)$ such that $p_t(C)- \varepsilon < p_{t,\delta}(C)<p_t(C)+\varepsilon<1$.  By the definition of $p_{t,\delta}(C)$ this implies that for all $\delta_0 \in  [\delta, \delta^{p_t(C)+\varepsilon}]$ we have
\[
N_{\delta_0}(C) \leq \delta_0^{-t}
\]
which completes the proof.
\end{proof}

We will now turn to the proof of Theorem  \ref{upperbound}.
\begin{proof}
Let  $t> \max\{ s , \, \underline{\dim}_\text{B} C\}$.  By Lemma \ref{keylem} (4), we have $p_{t}(C) \leq \underline{\dim}_\text{B}C/t<1$ and so by Lemma \ref{findd}, for all $\varepsilon \in \big(0, 1-p_t(C) \big)$, there exists $\delta \in (0,\varepsilon)$ such that
\begin{equation} \label{boundp1}
p_t(C)-\varepsilon<p_{t,\delta}(C)<p_t(C)+\varepsilon
\end{equation}
and for all $\delta_0 \in [\delta, \delta^{p_t(C)}]$ we have
\begin{equation} \label{boundp2}
N_{\delta_0}(C) \leq \delta_0^{-t}.
\end{equation}
Fix $\varepsilon \in (0, 1-p_t(C) )$ and choose $\delta \in (0,\varepsilon)$ satisfying (\ref{boundp1}, \ref{boundp2}).  Write $p_{t,\delta} = p_{t,\delta}(C)$ and $p_t = p_t(C)$.  We have
\begin{eqnarray*}
N_\delta(\mathcal{O}) &=& N_\delta \Big( C \cup \bigcup_{\textbf{i} \in \mathcal{I}^*} S_{\textbf{i}}(C) \Big) \\ \\
&\leq& \sum_{\substack{\textbf{i} \in \mathcal{I}^*: \\ \\ \delta^{1-p_{t,\delta}-\varepsilon}  \, \leq \,  \text{Lip}(S_i) \, < \, 1}} N_\delta \big(  S_{\textbf{i}}(C) \big) \ + \ \sum_{\substack{\textbf{i} \in \mathcal{I}^*: \\ \\ \delta \, \leq \,  \text{Lip}(S_i) \,  < \, \delta^{1-p_{t,\delta}-\varepsilon}}} N_\delta \big(  S_{\textbf{i}}(C) \big) \\ \\
&\quad& \qquad   \qquad  \ + \   N_\delta \ \Bigg( \  \bigcup_{\substack{\textbf{i} \in \mathcal{I}^*: \\ \\
 \text{Lip}(S_\textbf{i}) \, < \, \delta}} S_{\textbf{i}}(C) \ \Bigg) \ + \ N_\delta(C) \\ \\
&\leq& \sum_{\substack{\textbf{i} \in \mathcal{I}^*: \\ \\ \delta^{1-p_{t,\delta}-\varepsilon} \,  \leq \,  \text{Lip}(S_i)  \, < \, 1}} N_{\delta/\text{Lip}(S_{\textbf{i}})} (C)  \ + \ \sum_{\substack{\textbf{i} \in \mathcal{I}^*: \\ \\ \delta \, \leq \,  \text{Lip}(S_i) \, < \,  \delta^{1-p_{t,\delta}-\varepsilon}}} N_{\delta/\text{Lip}(S_{\textbf{i}})} (C)  \\ \\
&\quad& \qquad \qquad  \ + \   N_\delta \ \Bigg( \  \bigcup_{\textbf{i} \in \mathcal{I}(\delta)} S_{\textbf{i}}(X) \ \Bigg) \ + \ N_\delta(C)\\ \\
&\leq& \sum_{\substack{\textbf{i} \in \mathcal{I}^*: \\ \\ \delta^{1-p_{t,\delta}-\varepsilon} \, \leq \,  \text{Lip}(S_i) \, < \, 1}}   \big(\delta/\text{Lip}(S_{\textbf{i}}) \big)^{-t}  \ + \ \sum_{\substack{\textbf{i} \in \mathcal{I}^*: \\ \\ \delta \, \leq \,  \text{Lip}(S_i) \, < \,  \delta^{1-p_{t,\delta}-\varepsilon}}} N_{\delta^{p_{t,\delta}+\varepsilon}} (C) \\ \\
&\quad& \qquad \qquad   \ + \   \sum_{\textbf{i} \in \mathcal{I}(\delta)} N_{\delta/\text{Lip}(S_{\textbf{i}})} (X)  \ + \ \delta^{-t} \qquad \qquad \text{by (\ref{boundp1}, \ref{boundp2})}\\ \\
&\leq&  \delta^{-t} \sum_{\substack{\textbf{i} \in \mathcal{I}^*: \\ \\ \delta^{1-p_{t,\delta}-\varepsilon} \, \leq \,  \text{Lip}(S_i) \,  < \, 1}} \text{Lip}(S_{\textbf{i}})^{t}  \ + \  \sum_{\substack{\textbf{i} \in \mathcal{I}^*: \\ \\ \delta \,  \leq \, \text{Lip}(S_i) \, < \, \delta^{1-p_{t,\delta}-\varepsilon}}}  \delta^{-(p_{t,\delta}+\varepsilon)t}  \ + \    N_1(X) \, \lvert  \mathcal{I}(\delta) \rvert\\ \\
&\quad& \qquad \qquad \ + \  \delta^{-t} \qquad \qquad  \text{by (\ref{boundp1}, \ref{boundp2})}\\ \\
&\leq&  \delta^{-t} \sum_{\textbf{i} \in \mathcal{I}^*} \text{Lip}(S_{\textbf{i}})^{t}  \ + \  \lvert \{\textbf{i} \in \mathcal{I}^*: \delta \leq \text{Lip}(S_i)   \} \rvert  \  \delta^{-(p_{t,\delta}+\varepsilon)t} \\ \\
&\quad& \qquad \qquad  \ + \    N_1(X) \, \delta^{-s}  \ + \  \delta^{-t} \qquad \qquad  \text{by Lemma \ref{stopsize}} \\ \\
&\leq&\big(  b_t +N_1(X)+1 \big) \, \delta^{-t}  \ + \  \frac{\log \delta}{\log L_{\max}} \, \delta^{-s} \,  \delta^{-(p_{t,\delta}+\varepsilon)t}  \qquad \qquad  \text{by Lemmas \ref{sumsize} and \ref{setsize}} \\ \\
&\leq&\big(  b_t +N_1(X) +1\big) \, \delta^{-t}  \ + \  \frac{\log \delta}{\log L_{\max}}  \, \delta^{-(s+(p_{t}+2\varepsilon)t)}
\end{eqnarray*}
from which it follows that $\underline{\dim}_\text{B} F_C = \underline{\dim}_\text{B} \mathcal{O} \leq \max\{ t, \ s+(p_{t}+2\varepsilon)t\}$ and letting $\varepsilon$ tend to zero yields the desired upper bound.  Note that we do not obtain an upper bound for the upper box dimension here as we only find a sequence of $\delta$s tending to zero for which the above estimate holds.
\end{proof}

\subsection{Proof of Proposition \ref{exlem}}  \label{exlemproof}

 Let $X = [0,1]^d$ for some $d \in \mathbb{N}$ and let $0<b<B\leq d$.  We will first describe a general way of constructing sets $C\subseteq [0,1]^d$ which gives us the required control over the oscillations of the function $N_\delta(C)$.
\\ \\
For $k \in \mathbb{N}$, let $\mathcal{Q}_k$ be the set of closed $2^{-k} \times \cdots  \times 2^{-k}$ cubes formed by imposing a $2^{-k}$ grid on $[0,1]^d$ orientated at the origin.  For each $k$ select a subset of these cubes and call their union $Q_k$.  We assume that $[0,1]^d \supseteq Q_1 \supseteq Q_2 \supseteq \dots$ and that if a cube is chosen at the $k$th step, then at least one sub-cube is chosen at the $(k+1)$th stage.  Finally, we set $C = \cap_{k \in \mathbb{N}} Q_k$.  Let $M_{2^{-k}}(C)$ denote the number of cubes in $\mathcal{Q}_k$ which intersect $C$.  We will only choose cubes at the $k$th level in two different ways:
\\ \\
\emph{Method 1}: at the $(k+1)$th stage we choose \emph{precisely one} cube from each $k$th level cube;
\\ \\
and
\\ \\
\emph{Method 2}: at the $(k+1)$th stage we choose \emph{all} sub-cubes from within each $k$th level cube.
\\ \\
For $\delta \in (0,1)$, let $k(\delta) = \max \big\{k \in \mathbb{N} \cup 0 : \delta \leq 2^{-k} \big\}$.  It is easy to see that
\[
3^{-d} \, M_{2^{-k(\delta)}}(C) \leq N_\delta(C) \leq M_{2^{-(k(\delta)+1+d)}}(C).
\]
Also, for all $k \in \mathbb{N}$,
\[
M_{2^{-k}}(C) \leq M_{2^{-(k+1)}}(C)  \leq 2^d \, M_{2^{-k}}(C)
\]
and these bounds are tight as if at the $(k+1)$th stage we use method 1, then we attain the left hand bound and if at the $(k+1)$th stage we use method 2, then we attain the right hand bound.

\begin{figure}[H]
	\centering
	\includegraphics[width=150mm]{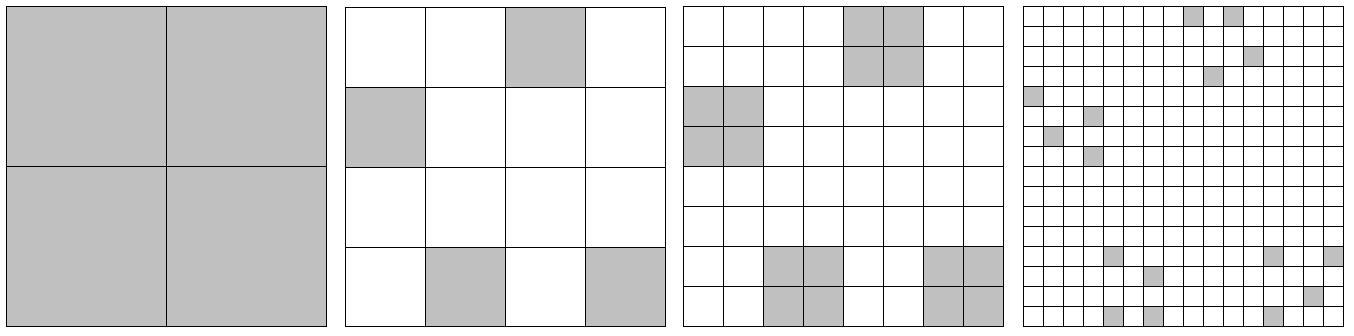}
		\caption{The first 4 steps in the construction of a compact set $C \subset [0,1]^2$ using methods 2, 1, 2, 1 respectively.}
\end{figure}

\emph{Proof of (1):} The key to constructing a compact set $C \subseteq X$ with $p_t(C) = 0$ for all $t\geq b$ is to force $N_\delta(C)$ to be strictly smaller than $\delta^{-b}$ for increasingly long periods of time as $\delta \to 0$.  Let $\mathcal{N}(2,k)$ denote the number of times we use method 2 in the first $k$ steps in the construction of $C$ and let
\[
\overline{\mathcal{N}}(2) = \limsup_{k \to \infty}  \frac{\mathcal{N}(2,k)}{k}
\]
and
\[
\underline{\mathcal{N}}(2) = \liminf_{k \to \infty} \frac{\mathcal{N}(2,k)}{k}.
\]
Observe that
\[
M_{2^{-k}}(C) = 2^{d \mathcal{N}(2,k)}
\]
and hence
\begin{equation} \label{boxdims}
\underline{\dim}_{\text{B}}C = d \,\underline{\mathcal{N}}(2) \qquad \text{and} \qquad  \overline{\dim}_{\text{B}}C = d \,\overline{\mathcal{N}}(2).
\end{equation}
Also observe that if $\delta>0$ is such that $\mathcal{N}(2,k(\delta)+d+1) < bk(\delta)/d$, then
\begin{equation} \label{pestim}
N_{\delta}(C) \leq  M_{2^{-(k(\delta)+1+d)}}(C) = 2^{d \mathcal{N}(2,k(\delta)+d+1)} < 2^{bk(\delta)} \leq \delta^{-b}.
\end{equation}
It is clear that we may alternate between methods 1 and 2 when constructing $C$ in such a way as to ensure that
\[
\overline{\mathcal{N}}(2) = B/d, \,  \qquad \underline{\mathcal{N}}(2) = b/d
\]
and for infinitely many $k_0 \in \mathbb{N}$, we have, for all $k =k_0, \dots, k_0^2$, that
\[
\mathcal{N}(2, k+d+1) < bk/d.
\]
It follows from (\ref{boxdims}) and (\ref{pestim}) that such a compact set $C$ has the desired properties. To show that $p_t(C) = 0$ for all $t \geq b$ it suffices to prove that $p_b(C) = 0$ since $p_t(C)$ is decreasing in $t$ (Lemma \ref{keylem} (2)).  To see that $p_b(C) = 0$ observe that if $\delta>0$ is such that $k(\delta) = k_0^2$ for such a $k_0$ described above, then
\[
N_{\delta'}(C) < (\delta')^{-b}
\]
for all $\delta' \in [\delta, 2^{-k_0}]$ by (\ref{pestim}).  Hence,
\[
(2^{-k_0^2})^{p_{b,\delta}(C)} \geq \delta^{p_{b,\delta}(C)} \geq 2^{-k_0}
\]
which yields $p_{b,\delta}(C) \leq 1/k_0$ and letting $k_0$ tend to infinity (and thus $\delta$ tend to zero) proves that $p_b(C) = 0$. \hfill \qed
\\ \\
\emph{Proof of (2):}   The key to constructing a compact set $C \subseteq X$ with
\[
p_t(C)=\frac{b}{t} \ \frac{d-t}{d-b}
\]
for all $t \in(b,B)$ is to force $N_\delta(C)$ to oscillate as fast as possible as $\delta \to 0$.  We alternate between choosing cubes according to method 1 and 2 as fast as we can making sure that the lower box dimension is $b$ and the upper box dimension is $B$.  Unfortunately, there is a bound on how quickly we can do this (seen in Lemma \ref{keylem} (6)).  We construct $C$ in the following way.  Use method 1 from step 1 until $k_1$ where $k_1 \in \mathbb{N}$ is the first time that
\[
M_{2^{-k_1}}(C) \leq 2^{k_1b}
\]
then change to method 2 from step $k_1+1$ until $k_2>k_1$ where $k_2 \in \mathbb{N}$ is the next occasion where
\[
M_{2^{-k_2}}(C) \geq 3^d \, 2^B \, 2^{k_2B}
\]
then change back to method 1.  Repeat this process as $k \to \infty$ to obtain an infinite increasing sequence $\{k_n\}_{n \in \mathbb{N}}$ where 
\begin{equation} \label{UB}
M_{2^{-k_{2n-1}}}(C) \leq 2^{k_{2n-1}b}
\end{equation}
and
\begin{equation} \label{LB}
M_{2^{-k_{2n}}}(C) \geq 3^d \, 2^{k_{2n}B}
\end{equation}
for each $n \in \mathbb{N}$.  Furthermore, it is clear that 
\[
2^{-b} \,  2^{kb}  \leq M_{2^{-k}}(C) \leq 3^d \, 2^d \, 2^{kB}
\]
for all $k \in \mathbb{N}$ and it follows from this and (\ref{UB}, \ref{LB}) that $b = \underline{\dim}_{\text{B}}C < \overline{\dim}_{\text{B}}C = B$.  Let $t \in (b,B)$ and observe that 
\[
p_t(C) \leq \frac{b}{t} \ \frac{d-t}{d-b}.
\]
by Lemma \ref{keylem} (6).  We will now show the opposite inequality.  For each $k_{2n}$ above, let $\overline{k_{2n}}$ be the biggest integer less than or equal to $B t^{-1} k_{2n}$ and let $\underline{k_{2n}}$ be the smallest integer greater than or equal to $(d-B) (d-t)^{-1} k_{2n}$.  It follows that for each $n \in \mathbb{N}$ we have
\[
N_{2^{-\overline{k_{2n}}}}(C) \geq 3^{-d} \, M_{2^{-\overline{k_{2n}}}}(C) \geq 3^{-d} \, M_{2^{-k_{2n}}}(C)\geq 3^{-d} \, 3^d \, 2^{k_{2n}B}  \geq 2^{\overline{k_{2n}}t} = \big( 2^{-\overline{k_{2n}}}\big)^{-t}
\]
and
\[
N_{2^{-\underline{k_{2n}}}}(C) \geq 3^{-d} \, M_{2^{-\underline{k_{2n}}}}(C) \geq 3^{-d} \,  2^{(\underline{k_{2n}}-k_{2n})d} M_{2^{-k_{2n}}}(C)  \geq 3^{-d} \,  2^{(\underline{k_{2n}}-k_{2n})d}     3^d \, 2^{k_{2n}B} \geq \big( 2^{-\underline{k_{2n}}}\big)^{-t}.
\]
Clearly for $\delta \in (2^{-\overline{k_{2n}}}, 2^{-\underline{k_{2n}}})$ we have $N_\delta(C) \geq \delta^{-t}$.  This implies that, asymptotically, $p_{t,\delta}(C)$ cannot be smaller than the case where $\delta =2^{-\underline{k_{2(n+1)}}}$ and, writing $p = p_{t,2^{-\underline{k_{2(n+1)}}}}(C)$,
\[
2^{-\underline{k_{2(n+1)}}p } = 2^{-\overline{k_{2n}}},
\]
i.e. if $p = \overline{k_{2n}}/{\underline{k_{2(n+1)}}}$.  This yields
\begin{eqnarray}
p_{t}(C) \ \geq \  \liminf_{n \to \infty} \frac{\overline{k_{2n}}}{{\underline{k_{2(n+1)}}}} &\geq& \liminf_{n \to \infty} \frac{k_{2n}}{k_{2(n+1)}} \frac{\Big(B/t-1/k_{2n}\Big)}{\Big((d-B)/(d-t)+1/k_{2(n+1)}\Big)} \nonumber \\ \nonumber \\
&\geq& \frac{B}{t} \ \frac{d-t}{d-B} \  \liminf_{n \to \infty} \frac{k_{2n}}{k_{2(n+1)}} \label{part1}.
\end{eqnarray}
Fix $n \in \mathbb{N}$ and observe that
\[
2^{(k_{2(n+1)}-k_{2n+1})d} \, 2^{k_{2n+1}b-b} \leq 2^{(k_{2(n+1)}-k_{2n+1})d} \, M_{2^{-k_{2n+1}}}(C) \leq  M_{2^{-k_{2(n+1)}}}(C) \leq 3^d \, 2^d \, 2^{k_{2(n+1)}B} \leq 2^{3d+k_{2(n+1)}B}
\]
from which it follows that
\[
(k_{2(n+1)}-k_{2n+1})d +k_{2n+1}b-b \  \leq \  3d+k_{2(n+1)}B
\]
and hence
\begin{equation} \label{part2}
\frac{k_{2n+1}}{k_{2(n+1)}} \ \geq \  \frac{d-B}{d-b} - \frac{b+3d}{k_{2(n+1)}(d-b)}.
\end{equation}
Also we have
\[
2^{-b} \, 2^{(k_{2n+1}-1)b} \leq M_{2^{-(k_{2n+1}-1)}}(C) = M_{2^{-k_{2n}}}(C) \leq 3^d \, 2^d \, 2^{k_{2n}B} \leq 2^{3d+k_{2n}B}
\]
from which it follows that
\[
(k_{2n+1}-1)b-b \ \leq \  3d+k_{2n}B
\]
and hence
\begin{equation} \label{part3}
\frac{k_{2n}}{k_{2n+1}} \ \geq \  \frac{b}{B} - \frac{2b+3d}{k_{2n+1}B}.
\end{equation}
It follows from (\ref{part1}, \ref{part2}, \ref{part3}) that
\[
p_{t}(C) \ \geq \  \frac{B}{t} \ \frac{d-t}{d-B} \  \liminf_{n \to \infty} \frac{k_{2n}}{k_{2(n+1)}} \ \geq \
 \frac{B}{t} \ \frac{d-t}{d-B} \ \frac{b}{B} \ \frac{d-B}{d-b} \ = \ \frac{b}{t} \ \frac{d-t}{d-b}
\]
which is the desired lower bound and completes the proof. \hfill \qed
\\ \\
\begin{centering}

\textbf{Acknowledgements}

\end{centering}

The author was supported by an EPSRC Doctoral Training Grant.

\end{document}